\def\F{\mathbb{F}}
\def\N{\mathbb{N}}
\DeclareMathOperator{\PG}{PG}
\theoremstyle{definition}
\newtheorem{theorem}{Theorem}[section]
\newtheorem{lemma}[theorem]{Lemma}
\newtheorem{definition}[theorem]{Definition}
\newtheorem{remark}[theorem]{Remark}
\newtheorem{corollary}[theorem]{Corollary}
\newtheorem{notation}[theorem]{Notation}
\newtheorem{example}[theorem]{Example}
\newcommand{\comments}[1]{}
\newcommand{\gs}[3]{\genfrac{[}{]}{0pt}{0}{#1}{#2}_{#3}}
\author{Maarten De Boeck}
\title{The second largest Erd\H{o}s-Ko-Rado sets of generators of the hyperbolic quadrics $\mathcal{Q}^{+}(4n+1,q)$}
\date{}
\begin{document}
\maketitle

\begin{abstract}
  An Erd\H{o}s-Ko-Rado set of generators of a hyperbolic quadric is a set of generators which are pairwise not disjoint. In this article we classify the second largest maximal Erd\H{o}s-Ko-Rado set of generators of the hyperbolic quadrics $\mathcal{Q}^{+}(4n+1,q)$, $q\geq3$.
  \paragraph*{Keywords:} Erd\H{o}s-Ko-Rado theorem, hyperbolic quadric
  \paragraph*{MSC 2010 codes:} 05B25, 51A50, 51E20, 52C10
\end{abstract}

\section{Introduction}
\label{sec:introduction}

\subsection{Hyperbolic quadrics}

Finite classical polar spaces are incidence structures consisting of the subspaces of a vector space over a finite field that are totally isotropic with respect to a certain non-degenerate sesquilinear or quadratic form. Thereby incidence is defined by the inclusion relation. Their maximal totally isotropic subspaces are called generators. Since the geometry of the subspaces of a vector space is a projective space, these classical polar spaces can be considered as substructures of a projective space. The projective space of dimension $n$ over the finite field $\F_{q}$ will be denoted by $\PG(n,q)$. Throughout this paper we will use the projective dimension for subspaces of polar spaces: a point is $0$-dimensional, a line is $1$-dimensional, ...
\par If such a classical polar space arises from a non-degenerate quadratic form it is called a quadric. As a substructure of the projective space it is defined by a  non-degenerate homogeneous quadratic equation of degree $2$. According to this quadratic form (or quadratic equation), three types of quadrics are distinguished: elliptic quadrics, parabolic quadrics and hyperbolic quadrics.
The quadrics in $\PG(2m+1,q)$ which can be defined by the standard equation
\[
  X_{0}X_{1}+X_{2}X_{3}+\cdots+X_{2m}X_{2m+1}=0\;,
\]
after projective transformation, are the hyperbolic quadrics. A hyperbolic quadric in $\PG(2m+1,q)$ is denoted by $\mathcal{Q}^{+}(2m+1,q)$.
\par An extensive introduction to quadrics can be found in \cite[Chapter 22]{ggg}. We list some basic facts which will be used in this article without reference.

\begin{theorem}
  Let $\mathcal{Q}^{+}(2m+1,q)$ be a hyperbolic quadric in $\PG(2m+1,q)$ and let $\Omega$ be its set of generators.
  \begin{itemize}
    \item The generators of $\mathcal{Q}^{+}(2m+1,q)$ are $m$-spaces.
    \item The number of elements of $\Omega$ is $\prod^{m}_{i=0}(q^{i}+1)$.
    \item The equivalence relation $\sim$ on $\Omega$, defined by $\pi\sim\pi'\Leftrightarrow\dim(\pi\cap\pi')=m\pmod{2}$ for any $\pi,\pi'\in\Omega$, partitions $\Omega$ in two generator classes of the same size, commonly called the Latin and the Greek generators.
    \item Let $\pi_{i}$ be an $i$-dimensional subspace of $\mathcal{Q}^{+}(2m+1,q)$, $0\leq i\leq m$. The tangent space $T_{\pi_{i}}(\mathcal{Q}^{+}(2m+1,q))$ to $\mathcal{Q}^{+}(2m+1,q)$ in $\pi_{i}$ is a $(2m-i)$-space in $\PG(2m+1,q)$. The intersection $T_{\pi_{i}}(\mathcal{Q}^{+}(2m+1,q))\cap\mathcal{Q}^{+}(2m+1,q)$ is a cone with vertex $\pi_{i}$ and base a hyperbolic quadric $\mathcal{Q}^{+}(2(m-i)-1,q)$.
  \end{itemize}
\end{theorem}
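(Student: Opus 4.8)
I work throughout with the standard quadratic form $Q(X)=\sum_{i=0}^{m}X_{2i}X_{2i+1}$ on the $(2m+2)$-dimensional vector space $V$ underlying $\PG(2m+1,q)$, its polarisation $B(x,y)=Q(x+y)-Q(x)-Q(y)$, and the perp $U^{\perp}=\{x\in V:B(x,u)=0\text{ for all }u\in U\}$ of a subspace $U$. A totally isotropic subspace satisfies $U\subseteq U^{\perp}$, and non-degeneracy of $B$ gives $\dim U+\dim U^{\perp}=2m+2$ in vector dimension; hence $\dim U\leq m+1$, so generators have projective dimension at most $m$, while $\langle e_{0},e_{2},\dots,e_{2m}\rangle$ attains the bound, giving the first item. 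For the fourth item, $T_{\pi_{i}}=\pi_{i}^{\perp}$ has vector dimension $2m+1-i$, i.e.\ it is a $(2m-i)$-space; total isotropy yields $\pi_{i}\subseteq T_{\pi_{i}}\cap\mathcal{Q}^{+}$, and for a quadric point $x\in T_{\pi_{i}}$ and $p\in\pi_{i}$ the identity $Q(x+\lambda p)=Q(x)+\lambda B(x,p)+\lambda^{2}Q(p)=0$ shows the whole line $\langle x,p\rangle$ lies on the quadric, so the intersection is a cone with vertex $\pi_{i}$; its base is the quadric of the non-degenerate form induced on $\pi_{i}^{\perp}/\pi_{i}$, a $2(m-i)$-dimensional space on which the Witt index drops to $m-i$, hence a hyperbolic $\mathcal{Q}^{+}(2(m-i)-1,q)$.

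The second item I would obtain by a recursion on $m$ for $g_{m}:=|\Omega|$. Fixing a quadric point $P$ and applying the fourth item with $i=0$, the tangent hyperplane meets $\mathcal{Q}^{+}$ in a cone with vertex $P$ over a base $\mathcal{Q}^{+}(2m-1,q)$; every generator through $P$ is totally isotropic, hence lies in $T_{P}=P^{\perp}$, so the generators through $P$ correspond bijectively to the $g_{m-1}$ generators of that base. Double counting incident (point, generator) pairs, with $\tfrac{q^{m+1}-1}{q-1}$ points per generator and $\tfrac{(q^{m}+1)(q^{m+1}-1)}{q-1}$ points on the quadric (the standard point count), yields $g_{m}=(q^{m}+1)\,g_{m-1}$. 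The base case $g_{0}=2=q^{0}+1$, where $\mathcal{Q}^{+}(1,q)$ is a pair of points, then gives $g_{m}=\prod_{i=0}^{m}(q^{i}+1)$ by induction.

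The third item is the genuinely structural one. The plan is to manufacture a single $\Z/2$-valued invariant that simultaneously two-colours $\Omega$ and records the parity of every intersection dimension. By Witt's extension theorem the orthogonal group $O^{+}(2m+2,q)$ is transitive on generators; let $D\colon O^{+}(2m+2,q)\to\Z/2$ be the Dickson invariant, a surjective homomorphism whose kernel, having index $2$ in a transitive group, has at most two orbits on $\Omega$. Two computations then close the argument: (i) the stabiliser of a fixed generator $\pi_{0}$ lies in $\ker D$, so $c(\pi):=D(g)$ is well defined whenever $g\pi_{0}=\pi$, whence the fibres of $c$ are exactly the two $\ker D$-orbits, of equal size $\tfrac12|\Omega|$; and (ii) $c(\pi)\equiv m-\dim(\pi\cap\pi_{0})\pmod 2$, which identifies $\sim$ with equality of $c$ and makes $\sim$ an equivalence relation with exactly these two classes. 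For (ii) I would reduce, via transitivity on suitable flags, to generators meeting in a hyperplane: through each $(m-1)$-space $\sigma$ pass exactly two generators---the two points of the $\mathcal{Q}^{+}(1,q)$ carried by the $2$-dimensional quotient $\sigma^{\perp}/\sigma$---they meet precisely in $\sigma$, and the element of $O^{+}(2m+2,q)$ interchanging them has $D=1$, so a single elementary step flips both $c$ and the parity of the intersection dimension. Equal class sizes also follow at once from any reflection (for $q$ odd), which has $D=1$ and hence swaps the two classes.

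The main obstacle is the parity bookkeeping behind (ii): one must show that one invariant governs all intersection parities simultaneously---including the extreme value $\dim(\pi\cap\pi')=-1$, which must be assigned the parity of $m+1$---and not merely the neighbouring case. This is the single place where genuine structure theory of the orthogonal group enters; the first, second and fourth items are essentially linear algebra together with one counting recursion.
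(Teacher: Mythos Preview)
The paper does not prove this theorem at all: it is stated as background, with a blanket reference to Hirschfeld--Thas, \emph{General Galois Geometries}, and the sentence ``We list some basic facts which will be used in this article without reference.'' So there is no proof in the paper to compare against.

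Your sketch is sound. Items~1, 2 and~4 are the standard linear-algebra and double-counting arguments and are correct as written. For item~3 your route via the Dickson invariant is a legitimate and rather clean way to produce the two-colouring; the classical textbook proofs (e.g.\ in Hirschfeld--Thas or in Taylor's \emph{The Geometry of the Classical Groups}) often argue more combinatorially, fixing one generator $\pi_{0}$ and showing directly that the parity of $m-\dim(\pi\cap\pi_{0})$ is an invariant by analysing a chain of generators each meeting the next in a hyperplane. Your approach has the advantage that the well-definedness of $c$ comes for free from group theory once (i) is established, whereas the combinatorial route must verify transitivity of $\sim$ by hand.

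One point to tighten in your step~(ii): the reduction ``via transitivity on suitable flags'' to the hyperplane case needs the explicit fact that for any $\pi$ one can pass from $\pi_{0}$ to $\pi$ by a chain of exactly $m-\dim(\pi\cap\pi_{0})$ generators, consecutive ones meeting in a hyperplane. This is easy to exhibit in coordinates (swap one hyperbolic pair at a time), but without it the argument that adjacent generators have different $c$ only shows the dual polar graph is bipartite, not yet that the bipartition coincides with the intersection parity relative to \emph{every} generator. Once you have such a chain, the equivariance $c(g\pi)=c(\pi)+D(g)$ transports the conclusion from $\pi_{0}$ to an arbitrary base generator and closes the argument. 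Your final paragraph already flags this bookkeeping as the genuine obstacle, which is accurate.
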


To simplify some of the notations in this article, we recall the definition of the Gaussian binomial coefficient:
\[
  \gs{n}{k}{q}=\prod_{i=1}^{k}\left(\frac{q^{n+1-i}-1}{q^i-1}\right)\;.
\]
The number of $k$-spaces in $\PG(n,q)$ is given by $\gs{n+1}{k+1}{q}$.

\subsection{\texorpdfstring{Erd\H{o}s-Ko-Rado}{Erdos-Ko-Rado} sets}

The original Erd\H{o}s-Ko-Rado theorem was stated in \cite{ekr}.
\begin{theorem}[{\cite[Theorem 1]{ekr}}]
  If $\mathcal{S}$ is a family of subsets of size $k$ in a set $\Omega$ with $|\Omega|=n$ and $n\geq 2k$, such that the elements of $\mathcal{S}$ are pairwise not disjoint, then $|\mathcal{S}|\leq \binom{n-1}{k-1}$.% If $n\geq 2k +1$, then equality holds if and only if $\mathcal{S}$ is the set of all subsets of size $k$ containing a fixed element of $\Omega$.
\end{theorem}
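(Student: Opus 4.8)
The plan is to use Katona's cyclic-permutation (double-counting) argument, which delivers the sharp bound with essentially no computation. The idea is to arrange the $n$ elements of $\Omega$ around a circle and, summing over all circular arrangements, count how many members of $\mathcal{S}$ happen to occupy $k$ consecutive positions, i.e.\ form a contiguous \emph{arc}.

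First I would establish the local lemma: in any single circular arrangement of the $n$ elements, the number of length-$k$ arcs that are pairwise intersecting is at most $k$ (this is where $n\geq 2k$ enters). To prove it, fix one such arc $A$ in the family, occupying positions $\{1,\dots,k\}$. The arcs meeting $A$ are exactly those whose starting position lies in $\{1,\dots,k\}\cup\{n-k+2,\dots,n\}$, a total of $2k-1$ arcs. I would then pair the $2k-2$ arcs other than $A$ itself: for each $j\in\{1,\dots,k-1\}$, the arc occupying $\{j+1,\dots,k+j\}$ and the arc occupying $\{n-k+1+j,\dots,n,1,\dots,j\}$ are \emph{disjoint}, precisely because $n\geq 2k$ forces the $2k$ positions involved to be distinct. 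Since $\mathcal{S}$ is intersecting, at most one arc from each of these $k-1$ disjoint pairs can lie in $\mathcal{S}$; together with $A$ this gives at most $k$ arcs, proving the lemma.

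Second I would set up the double count over all $(n-1)!$ circular permutations of $\Omega$. On one side, by the local lemma each arrangement contributes at most $k$ members of $\mathcal{S}$, for at most $k\,(n-1)!$ incidences in total. On the other side, a fixed set $A\in\mathcal{S}$ appears as an arc in exactly $k!\,(n-k)!$ arrangements: glue $A$ into a single block, yielding $(n-k)!$ circular arrangements of the block together with the remaining $n-k$ elements, and $k!$ internal orderings of the block. Equating and rearranging gives
\[
  |\mathcal{S}|\,k!\,(n-k)! \;\leq\; k\,(n-1)!,
\]
whence $|\mathcal{S}|\leq \dfrac{k\,(n-1)!}{k!\,(n-k)!}=\dfrac{(n-1)!}{(k-1)!\,(n-k)!}=\binom{n-1}{k-1}$, as claimed.

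The main obstacle is the local lemma; the rest is bookkeeping. The delicate point is choosing the pairing correctly and verifying that each pair is genuinely disjoint, since this is the \emph{only} place the hypothesis $n\geq 2k$ is used: without it the two ``opposite'' arcs could overlap and the bound would collapse. An alternative route, closer to the original paper of Erd\H{o}s, Ko and Rado, would be the compression/shifting method, repeatedly replacing a larger label by a smaller one while preserving both $|\mathcal{S}|$ and the intersecting property, then bounding a compressed family by induction on $n$; there the analogous main obstacle is checking that compression preserves the intersecting property. I prefer Katona's argument because the single local lemma isolates the whole difficulty.
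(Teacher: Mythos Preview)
Your argument is the standard Katona cyclic-permutation proof and it is correct; the local lemma is stated and paired properly, the disjointness of each pair uses exactly $n\geq 2k$, and the double count is set up with the right multiplicities.

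However, there is nothing to compare against: the paper does \emph{not} prove this theorem. It appears in the introduction purely as background, attributed to \cite{ekr}, with no proof given or even sketched. So the question of whether you match the paper's approach is vacuous. For what it is worth, the original Erd\H{o}s--Ko--Rado paper proceeds by the compression/shifting technique you mention as an alternative, not by the circle method; Katona's argument came later. Your choice is cleaner and shorter, and isolates the role of $n\geq 2k$ in a single place, whereas shifting requires an induction and a somewhat fiddly verification that the shift preserves the intersecting property.
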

In honour of this result, a family of subsets of fixed size in a set, pairwise not disjoint, is called an Erd\H{o}s-Ko-Rado set.  Note that a point-pencil, a family of subsets of size $k$ through a fixed element, meets the upper bound of the previous theorem. It was proved that this is the unique example meeting this upper bound if $n\geq 2k+1$. This is part of the following result which also classifies the second largest Erd\H{o}s-Ko-Rado set.
\begin{theorem}[\cite{hm}]\label{hiltonmilner}
  Let $\Omega$ be a set of size $n$ and let $\mathcal{S}$ be an Erd\H{o}s-Ko-Rado set of $k$-subsets in $\Omega$, $k\geq3$ and $n\geq2k+1$. If there is no element in $\Omega$ which is contained in all subsets in $\mathcal{S}$, then
  \[
    |\mathcal{S}|\leq\binom{n-1}{k-1}-\binom{n-k-1}{k-1}+1.
  \]
  Moreover, equality holds if and only if 
  \begin{itemize}
    \item either $\mathcal{S}$ is the union of $\{F\}$, for some fixed $k$-subset $F$, and the set of all $k$-subsets $G$ of $\Omega$ containing a fixed element $x\notin F$, such that $G\cap F\neq\emptyset$,
    \item or else $k=3$ and $\mathcal{S}$ is the set of all subsets of size $3$ having an intersection of size at least $2$ with a fixed subset $F$ of size $3$.
  \end{itemize}
\end{theorem}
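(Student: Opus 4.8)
The right-hand side is precisely the cardinality of the canonical \emph{Hilton--Milner family}: fix a $k$-set $F$ and a point $x\notin F$, and set
\[
  \mathcal{HM}=\{F\}\cup\{G : x\in G,\ |G|=k,\ G\cap F\neq\emptyset\}.
\]
Indeed, there are $\binom{n-1}{k-1}$ sets $G$ of size $k$ with $x\in G$; among them exactly $\binom{n-k-1}{k-1}$ satisfy $G\cap F=\emptyset$ (their remaining $k-1$ points lie in $\Omega\setminus(\{x\}\cup F)$, a set of $n-k-1$ elements); subtracting and adding back the single set $F$ gives the stated number. So the theorem claims that $\mathcal{HM}$ --- and, only when $k=3$, also the ``triangle'' family of all $3$-sets meeting a fixed $3$-set in at least two points --- is extremal among the \emph{non-trivial} intersecting families, i.e.\ those whose members have empty common intersection.

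For the upper bound I would first normalise $\mathcal{S}$ by \emph{compression}. Each shift $S_{ij}$ with $i<j$ (replacing a member $A$ containing $j$ but not $i$ by $(A\setminus\{j\})\cup\{i\}$ whenever the latter is new) preserves $|\mathcal{S}|$ and the intersecting property, and after finitely many shifts yields a shifted family $\mathcal{S}^{*}$. Shifted intersecting families are highly constrained, so the bound can be extracted either from their rigid structure directly or, more robustly, by passing to the complement family $\{\Omega\setminus G : G\in\mathcal{S}^{*}\}$ of $(n-k)$-sets: the intersecting condition becomes a shadow-avoidance condition to which the Kruskal--Katona theorem applies, and the non-triviality of $\mathcal{S}$ contributes exactly the subtracted term $\binom{n-k-1}{k-1}$. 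This would give $|\mathcal{S}|=|\mathcal{S}^{*}|\le\binom{n-1}{k-1}-\binom{n-k-1}{k-1}+1$.

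The first genuine obstacle is that compression need not preserve non-triviality: shifting tends to push members toward the point $1$, and can turn a non-trivial family into a star centred at $1$, for which the size $\binom{n-1}{k-1}$ exceeds the Hilton--Milner value precisely because $n\ge2k+1$. I would handle this by applying only those shifts that keep the family non-trivial and analysing the ``stuck'' position separately --- showing that whenever a further shift would create a common point the family is already no larger than $\mathcal{HM}$ --- or, alternatively, by running the complement/Kruskal--Katona estimate in a form that builds in non-triviality and so is insensitive to this collapse. Making this dichotomy airtight, without silently losing any configuration, is the technical heart of the bound.

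The harder part is the equality characterisation. Here I would trace the compression backwards: at equality the Kruskal--Katona shadow inequalities must be tight, so by the uniqueness clause of Kruskal--Katona the extremal $\mathcal{S}^{*}$ is forced to be the compressed Hilton--Milner configuration, and a careful reverse-shifting argument then shows that the original $\mathcal{S}$ is, up to a permutation of $\Omega$, the family $\mathcal{HM}$. The sole exception emerges in the regime $k=3$, where the shadow computation admits a second tight configuration, namely the triangle family; pinning down exactly why this extra family appears for $k=3$ and for no larger $k$, and verifying that reverse shifting recovers precisely the two listed families, is the main difficulty. By contrast, once the compression framework and the Kruskal--Katona input are in place, the numerical inequality itself is comparatively routine.
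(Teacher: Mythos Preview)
The paper does not prove this theorem at all: it is quoted verbatim from Hilton and Milner \cite{hm} as background, and no argument is supplied. So there is no ``paper's own proof'' to compare against; your sketch stands on its own as an attempt to reprove a classical result that the paper simply imports.

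As a sketch of Hilton--Milner, what you outline is one of the standard modern routes (shifting combined with a Kruskal--Katona shadow estimate, in the spirit of Frankl's later arguments rather than the original 1967 proof). You have correctly identified the two genuine pressure points: that naive compression can collapse a non-trivial family into a star, and that the equality analysis must single out the $k=3$ triangle family. Your description of how to handle these is plausible at the level of a plan but is not yet a proof: phrases like ``analysing the stuck position separately'' and ``a careful reverse-shifting argument then shows'' are exactly where the work lies, and you have not indicated concretely what the stuck configurations look like or how reverse shifting is controlled. In particular, the uniqueness clause of Kruskal--Katona is delicate (it characterises extremal families only up to isomorphism of initial segments in colex order), and translating that back through the complement and the partial-shift history to recover the two stated families requires more than you have written. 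So the proposal is a reasonable roadmap, but the hard steps are asserted rather than carried out.
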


This problem has been generalised to different geometrical structures, such as vector spaces, polar spaces and designs. We refer to the survey articles \cite{bbsw,ds} and for detailed reading to \cite{fw,psv,ran,tan} among others.
\par In this article we will study the Erd\H{o}s-Ko-Rado problem for generators of hyperbolic quadrics.
\begin{definition}
  Let $\mathcal{Q}^{+}(2m+1,q)$ be a hyperbolic quadric and let $\Omega$ be its set of generators. An Erd\H{o}s-Ko-Rado set $\mathcal{S}$ of generators of $\mathcal{Q}^{+}(2m+1,q)$ is a subset of $\Omega$ such that any two elements of $\mathcal{S}$ have a nonempty intersection. If $\mathcal{S}$ is not extendable regarding this condition it is called maximal.
\end{definition}

The Erd\H{o}s-Ko-Rado problem now asks for the classification of the large maximal Erd\H{o}s-Ko-Rado sets. 
\par For a hyperbolic quadric $\mathcal{Q}^{+}(2m+1,q)$, its Erd\H{o}s-Ko-Rado sets heavily depend on the parity of $m$. If $m$ is even, any two generators belonging to the same class meet in at least a point; if $m$ is odd, any two generators belonging to different classes meet in at least a point. In the latter case, it is therefore sufficient to study Erd\H{o}s-Ko-Rado sets of generators of one class. In this article we study the former case. We give an example of a maximal Erd\H{o}s-Ko-Rado set.

\begin{example}\label{oneclass}
  Let $\mathcal{Q}^{+}(4n+1,q)$ be a hyperbolic quadric and let $\mathcal{S}$ be the set of generators of one class. Since any two generators belonging to the same class cannot be disjoint, $\mathcal{S}$ is an Erd\H{o}s-Ko-Rado set. Moreover, it is maximal since no generator of the other class can meet all generators of the class corresponding to $\mathcal{S}$.
  \par Note that $|\mathcal{S}|=\prod^{2n}_{i=1}(q^{i}+1)$, half of the total number of generators.
\end{example}

This is an important example, as the next theorem shows.

\begin{theorem}[{\cite[Theorem 9 and Theorem 16]{psv}}]\label{largest}
  Let $\mathcal{S}$ be an Erd\H{o}s-Ko-Rado set of generators of $\mathcal{Q}^{+}(4n+1,q)$. Then, $|\mathcal{S}|\leq\prod^{2n}_{i=1}(q^{i}+1)$. Furthermore, if  $|\mathcal{S}|=\prod^{2n}_{i=1}(q^{i}+1)$, then $\mathcal{S}$ is the Erd\H{o}s-Ko-Rado set described in Example \ref{oneclass}.
\end{theorem}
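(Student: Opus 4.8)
The plan is to turn the question into a coclique problem and apply the Hoffman (ratio) bound. Let $\Gamma$ be the graph on the set $\Omega$ of generators in which two generators are adjacent exactly when they are disjoint; then an Erd\H{o}s--Ko--Rado set is precisely a coclique of $\Gamma$, and the theorem becomes a bound on $\alpha(\Gamma)$ together with a description of the maximum cocliques. The first thing I would record is that, since $m=2n$ is even, two generators of the same class meet in a subspace of even (hence nonnegative) dimension and are therefore never disjoint, whereas two generators of different classes meet in a subspace of odd dimension and may be disjoint. Hence every edge of $\Gamma$ joins a Latin to a Greek generator: $\Gamma$ is bipartite with the two generator classes $L$ and $G$ as its parts, and $|L|=|G|=\tfrac12\prod_{i=0}^{2n}(q^{i}+1)=\prod_{i=1}^{2n}(q^{i}+1)$.

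Next I would check that $\Gamma$ is regular. The orthogonal group acts transitively on the generators of each class and preserves disjointness, so the number of generators disjoint from a fixed one depends only on its class; comparing the edge count from the two sides and using $|L|=|G|$ forces these numbers to agree in a common valency $k>0$. Thus $\Gamma$ is a $k$-regular bipartite graph on $N=2\prod_{i=1}^{2n}(q^{i}+1)$ vertices. For any $k$-regular bipartite graph the least adjacency eigenvalue is $-k$, with $\mathbf 1_{L}-\mathbf 1_{G}$ as an eigenvector, so the ratio bound gives
\[
  \alpha(\Gamma)\le N\cdot\frac{-(-k)}{k-(-k)}=\frac{N}{2}=\prod_{i=1}^{2n}(q^{i}+1),
\]
which is the asserted upper bound; it is attained by either full class, in agreement with Example~\ref{oneclass}. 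Note that the inequality alone needs only bipartiteness and regularity, not connectivity.

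For the classification I would use the equality case of the ratio bound: if $\mathcal S$ is a coclique with $|\mathcal S|=N/2$ and characteristic vector $\chi$, then $\chi-\tfrac12\mathbf 1$ must lie in the eigenspace of the least eigenvalue $-k$. The decisive point is that this eigenspace is one-dimensional, spanned by $\mathbf 1_{L}-\mathbf 1_{G}$. Granting this, $\chi=\tfrac12\mathbf 1+c(\mathbf 1_{L}-\mathbf 1_{G})$ for a scalar $c$, and the constraint $\chi\in\{0,1\}^{\Omega}$ forces $c=\pm\tfrac12$, i.e.\ $\mathcal S=L$ or $\mathcal S=G$. This is exactly the statement that $\mathcal S$ is one of the sets of Example~\ref{oneclass}.

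The main obstacle is therefore to prove that $-k$ is a simple eigenvalue of $\Gamma$, equivalently that $\Gamma$ is connected. I would handle this through the dual polar graph of $\mathcal Q^{+}(4n+1,q)$: its distance relations form an association scheme, $\Gamma$ is the relation of maximal distance $2n+1$, and the eigenvalues of $\Gamma$ can be read off from the known eigenvalues of this scheme; one then verifies that $-k$ occurs on a single eigenspace of multiplicity one. This is transparent in the base case $\mathcal Q^{+}(5,q)$, where $\Gamma$ is the non-incidence graph of the points and planes of $\PG(3,q)$: writing $D$ for the point--plane incidence matrix one computes $DD^{T}=q^{2}I+(q+1)J$, whence the non-incidence matrix $J-D$ has singular values $q^{3}$ (once) and $q$ (with high multiplicity), so the least eigenvalue of $\Gamma$ is $-q^{3}=-k$ with multiplicity one. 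In general the same conclusion should follow from the fact that the second-largest singular value of the relevant incidence map stays strictly below $k$; alternatively one can argue connectivity geometrically, by joining any two generators of one class through commonly disjoint generators of the other class. Establishing that such commonly disjoint generators exist for $q\ge3$ is where the real work lies.
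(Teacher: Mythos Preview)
The paper does not give its own proof of this theorem; it is quoted from \cite{psv}, where the argument is indeed the Hoffman (ratio) bound applied to the disjointness relation inside the dual polar association scheme. So your overall strategy is the intended one, and your derivation of the inequality $|\mathcal{S}|\le\prod_{i=1}^{2n}(q^{i}+1)$ is correct: $\Gamma$ is bipartite on the two generator classes, it is regular (in fact vertex-transitive under the full orthogonal group), and for a $k$-regular bipartite graph the ratio bound gives $\alpha(\Gamma)\le N/2$.

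There is, however, a genuine gap in the characterization of equality: you correctly reduce it to the simplicity of the eigenvalue $-k$, equivalently to the connectivity of $\Gamma$, but then leave this open and even suggest it might need $q\ge3$. It does not, and the missing step is short using a result already quoted in this paper. If $\pi_{1}\neq\pi_{2}$ are generators of the same class, they meet in a $j$-space with $j$ even and $0\le j\le 2n-2$; every generator skew to both necessarily lies in the other class, and Theorem~\ref{skewgenerators} gives
\[
  b^{2n}_{j}=q^{2\binom{n+j/2+1}{2}-\binom{j+1}{2}}\prod_{i=1}^{n-j/2}(q^{2i-1}-1)>0
\]
such generators for every $q\ge2$. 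Hence any two vertices in the same part of $\Gamma$ are joined by a path of length~$2$, and since $\Gamma$ has edges (there are $b^{2n}_{2n}=q^{\binom{2n+1}{2}}>0$ generators skew to a fixed one), $\Gamma$ is connected. This closes the gap for all $q$, and the spectral detour through the full dual polar scheme is not needed.
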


So, for general hyperbolic quadrics $\mathcal{Q}^{+}(4n+1,q)$ the largest Erd\H{o}s-Ko-Rado sets of generators have been classified. In fact, in \cite{psv}, the largest Erd\H{o}s-Ko-Rado sets of generators were classified for all finite classical polar spaces except the Hermitian varieties $\mathcal{H}(4n+1,q^{2})$, $n\geq2$. For those, the best result was obtained in \cite{im}.
\par We return to hyperbolic quadrics. For the smallest nontrivial case, a complete classification of Erd\H{o}s-Ko-Rado sets of generators is known.

\begin{theorem}[{\cite[Theorem 3.5]{bh}}]\label{q+5q}
  Let $\mathcal{S}$ be a maximal Erd\H{o}s-Ko-Rado set of generators of $\mathcal{Q}^{+}(5,q)$. Then, $\mathcal{S}$ is the set of all generators of one class, the set of generators meeting a fixed generator in at least a line, or the set of all generators through a fixed point.
\end{theorem}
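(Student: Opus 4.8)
The plan is to exploit the two-class structure of $\Omega$ together with the Klein correspondence, which for the case $m=2$ turns $\mathcal{Q}^{+}(5,q)$ into a transparent incidence geometry. First I record the two facts that drive everything. Since $m=2$ is even, two generators of the same class meet in even projective dimension, hence in a point or coincide; so any two generators of one class automatically meet. Two generators of different classes meet in odd dimension, hence either in a line or not at all. Writing $\mathcal{S}=\mathcal{S}^{+}\cup\mathcal{S}^{-}$ for the two class-parts, the Erd\H{o}s--Ko--Rado condition is therefore nontrivial only \emph{across} the classes: it says exactly that every generator of $\mathcal{S}^{+}$ meets every generator of $\mathcal{S}^{-}$ in a line. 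If $\mathcal{S}^{-}=\emptyset$ then no cross-condition survives, so any further same-class generator $L\notin\mathcal{S}^{+}$ could be adjoined without destroying the property, and maximality forces $\mathcal{S}^{+}=\Omega^{+}$; this gives the first type, and by symmetry so does $\mathcal{S}^{+}=\emptyset$. Hence I may assume both $\mathcal{S}^{+}$ and $\mathcal{S}^{-}$ are nonempty.

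Next I invoke the Klein correspondence between $\mathcal{Q}^{+}(5,q)$ and the line Grassmannian of $\PG(3,q)$: points of the quadric correspond to lines of $\PG(3,q)$, collinearity on the quadric to meeting lines, and the two generator classes correspond respectively to the stars (all lines through a fixed point $P$) and the rulings (all lines inside a fixed plane $\Pi$) of $\PG(3,q)$. Under this dictionary a star and a ruling share a line precisely when $P\in\Pi$. Recording $\mathcal{S}^{+}$ by the set $\mathcal{P}$ of points and $\mathcal{S}^{-}$ by the set $\mathcal{H}$ of planes of $\PG(3,q)$, the surviving Erd\H{o}s--Ko--Rado condition becomes a pure cross-incidence condition: every point of $\mathcal{P}$ lies on every plane of $\mathcal{H}$. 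A maximal Erd\H{o}s--Ko--Rado set then corresponds exactly to a pair $(\mathcal{P},\mathcal{H})$ that is closed (maximal) for this cross-incidence.

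I then classify the closed pairs. Put $W=\bigcap_{\Pi\in\mathcal{H}}\Pi$; as $\mathcal{H}\neq\emptyset$ this is a subspace, and cross-incidence gives $\mathcal{P}\subseteq W$, while $\langle\mathcal{P}\rangle$ is contained in every plane of $\mathcal{H}$. Using that two distinct planes of $\PG(3,q)$ meet in a line and that the intersection of all planes through a fixed subspace equals that subspace, closedness forces $\mathcal{P}$ to be the full point-set of a subspace $W$ and $\mathcal{H}$ to be the set of all planes containing $W$. Running through $\dim W\in\{-1,0,1,2,3\}$, the improper cases $\dim W\in\{-1,3\}$ are the one-class situations already handled, and the genuinely mixed cases are $\dim W\in\{0,1,2\}$. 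For $\dim W=1$, $W$ is a line $\ell$, which is a single point $x$ of the quadric, and $\mathcal{S}$ consists of all stars through points of $\ell$ together with all rulings of planes through $\ell$ --- that is, all generators through $x$, the third type. For $\dim W=0$ (resp.\ $\dim W=2$), $W$ is a point $P$ (resp.\ a plane $\Pi$), so $\mathcal{H}$ (resp.\ $\mathcal{P}$) is a singleton and $\mathcal{S}$ is one fixed generator $G$ together with all generators of the other class meeting $G$ in a line --- the second type.

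The verifications that each configuration is an Erd\H{o}s--Ko--Rado set (immediate, since same-class members automatically meet and the cross-incidence is built in) and is maximal (adjoining any further generator would violate $P\in\Pi$ for some pair) are routine. The step deserving the most care --- and the main obstacle --- is the Galois-connection lemma of the third paragraph: I must show that two nonempty families with full cross-incidence are always generated by a single subspace $W$, i.e.\ that the closure operator sending $\mathcal{H}$ to the set of points incident with all of $\mathcal{H}$, together with its dual, collapses every closed pair onto a point, line, or plane of $\PG(3,q)$. This reduces to checking that $\bigcap_{\Pi\in\mathcal{H}}\Pi$ and $\langle\mathcal{P}\rangle$ coincide at closure, whose only inputs are the elementary dimension formulas in $\PG(3,q)$; once this is in place, translating back through the Klein correspondence matches the five subspace dimensions to the three stated types. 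A self-contained alternative avoiding Klein would reconstruct the same subspace $W$ directly on the quadric by intersecting the tangent spaces of the generators in $\mathcal{S}$, but the incidence-geometric route above is the shorter one.
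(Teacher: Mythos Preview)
Your argument is correct. The reduction to a cross-incidence pair $(\mathcal{P},\mathcal{H})$ in $\PG(3,q)$ via the Klein correspondence is exactly right: the only nontrivial part of the EKR condition is between the two classes, and under Klein this becomes ``every point of $\mathcal{P}$ lies on every plane of $\mathcal{H}$''. Maximality then forces $\mathcal{P}$ to be the full point-set of $W=\bigcap_{\Pi\in\mathcal{H}}\Pi$ and $\mathcal{H}$ to be all planes over $W$, and the five values of $\dim W$ translate back to the three listed types; the identification of the $\dim W\in\{0,2\}$ cases with ``all generators meeting a fixed generator in at least a line'' is accurate because a same-class generator meets $G$ in at least a line only if it equals $G$.

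This is a genuinely different route from what the paper invokes. The result is quoted from \cite{bh}, and the paper explicitly notes that the proof there proceeds via the dual polar graph $D_{3}(q)$, classifying maximal $\{0,1,2\}$-cliques. Your approach instead exploits the accidental isomorphism $D_{3}\cong A_{3}$ (the Klein correspondence) to collapse the problem to an elementary Galois connection between points and planes of $\PG(3,q)$. What this buys you is a short, self-contained argument with no graph-theoretic machinery; what the dual-polar-graph approach of \cite{bh} buys is a framework that treats $\mathcal{Q}^{+}(5,q)$ uniformly alongside the other rank-$3$ polar spaces, where no such correspondence is available. Your closing remark about recovering $W$ intrinsically on the quadric via tangent spaces is a fair pointer to how one would decouple the argument from the Klein dictionary, though for $\mathcal{Q}^{+}(5,q)$ alone your route is the cleaner one.
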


These examples contain $q^{3}+q^{2}+q+1$, $q^{2}+q+2$ and $2q+2$ generators, respectively. We note that the proof of this theorem in \cite{bh} relies on the dual polar graph $D_{3}(q)$. This is the graph whose vertices are the generators of $\mathcal{Q}^{+}(5,q)$; two vertices are adjacent if and only the corresponding generators meet in a line (in general, in a hyperplane of a generator). The authors of \cite{bh} study the maximal $\{0,1,2\}$-cliques in this graph (and other dual polar graphs); these are vertex sets such that any two vertices in the set are at distance $0$, $1$ or $2$ in the graph. So, a $\{0,1,2\}$-clique in $D_{3}(q)$ corresponds to a set of generators (planes) of $\mathcal{Q}^{+}(5,q)$ that pairwise meet in a plane, line or point, i.e. an Erd\H{o}s-Ko-Rado set.
\par Since the largest Erd\H{o}s-Ko-Rado sets of generators have been classified, we focus on other maximal Erd\H{o}s-Ko-Rado sets of generators. In this article we classify the second largest maximal Erd\H{o}s-Ko-Rado sets of generators of $\mathcal{Q}^{+}(4n+1,q)$. This result can be found in Theorem \ref{maintheorem}. This is an analogue of Theorem \ref{hiltonmilner}.

\section{Counting results}
\label{sec:countingresults}

We recall two counting results, one about subspaces and one about generators.

\begin{theorem}[{\cite[Section 170]{seg}}]\label{skewsubspaces}
  The number of $j$-spaces skew to a fixed $k$-space in $\PG(n,q)$ equals $q^{(k+1)(j+1)}\gs{n-k}{j+1}{q}$.
\end{theorem}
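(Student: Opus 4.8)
The plan is to pass to the underlying vector space $V=\F_{q}^{n+1}$, where a $k$-space of $\PG(n,q)$ corresponds to a $(k+1)$-dimensional subspace and two projective subspaces are skew exactly when the associated vector subspaces meet only in the zero vector. Fixing a $(k+1)$-dimensional subspace $U$ corresponding to the given $k$-space, I would count the $(j+1)$-dimensional subspaces $W$ with $W\cap U=\{0\}$, since these are precisely the $j$-spaces skew to the fixed $k$-space.

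To obtain the factored closed form directly I would use the quotient map $\pi\colon V\to V/U$, noting $\dim(V/U)=n-k$. For any $W$ with $W\cap U=\{0\}$ the restriction $\pi|_{W}$ is injective, so $\pi(W)$ is a $(j+1)$-dimensional subspace $\overline{W}$ of $V/U$; conversely $W$ lies in the preimage $\widetilde{W}:=\pi^{-1}(\overline{W})$, a subspace of dimension $(k+1)+(j+1)$ containing $U$. Because $\dim W+\dim U=\dim\widetilde{W}$ and $W\cap U=\{0\}$, the subspace $W$ is exactly a vector-space complement of $U$ inside $\widetilde{W}$, and every such complement projects onto $\overline{W}$. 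Thus the count fibres over the $\gs{n-k}{j+1}{q}$ choices of $\overline{W}$, each fibre consisting of the complements of $U$ in a $(j+k+2)$-dimensional space.

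The crux is then the standard fact that the number of complements of a $d$-dimensional subspace in an $e$-dimensional $\F_{q}$-space is $q^{d(e-d)}$; with $d=k+1$ and $e=j+k+2$ this yields $q^{(k+1)(j+1)}$ complements per fibre. I would prove it by fixing one complement $W_{0}$ of $U$ in $\widetilde{W}$ and identifying every complement with the unique $\phi\in\operatorname{Hom}(W_{0},U)$ for which it equals $\{w+\phi(w):w\in W_{0}\}$, giving $q^{\dim W_{0}\cdot\dim U}=q^{(j+1)(k+1)}$. Multiplying the fibre size by the number of base points gives $q^{(k+1)(j+1)}\gs{n-k}{j+1}{q}$, as claimed; the formula also correctly returns $0$ when $j+1>n-k$, i.e. when no skew $j$-space exists.

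As an alternative I would count ordered bases: a linearly independent $(j+1)$-tuple spanning a subspace skew to $U$ is built by choosing $v_{i+1}\notin\langle U,v_{1},\dots,v_{i}\rangle$, giving $\prod_{i=0}^{j}(q^{n+1}-q^{k+1+i})$ tuples, and dividing by the $\prod_{i=0}^{j}(q^{j+1}-q^{i})$ ordered bases of a fixed $(j+1)$-space. Extracting the powers of $q$ from numerator and denominator and reindexing reproduces the same product; here the only real work is the routine algebraic simplification into the Gaussian binomial coefficient, which I expect to be straightforward rather than the main obstacle.
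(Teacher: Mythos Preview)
Your argument is correct: both the quotient/complement count and the ordered-basis count are standard, valid derivations of the formula. Note, however, that the paper does not actually prove this theorem; it is merely quoted from Segre (\cite[Section 170]{seg}) as a known counting result, so there is no ``paper's own proof'' to compare against. Your write-up therefore supplies a self-contained justification where the paper relies on a citation.
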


\begin{theorem}[{\cite[Corollary 5]{kms}}]\label{skewgenerators}
  Let $\mathcal{Q}^{+}(2m+1,q)$ be a hyperbolic quadric, and let $\pi_{1}$ and $\pi_{2}$ be two generators of $\mathcal{Q}^{+}(2m+1,q)$ meeting in a $j$-dimensional space. The number of generators skew to both $\pi_{1}$ and $\pi_{2}$ equals
  \[
    b^{m}_{j}:=\begin{cases}
                q^{2\binom{(m+j)/2+1}{2}-\binom{j+1}{2}}\prod^{(m-j)/2}_{i=1}(q^{2i-1}-1)& m\equiv j\pmod{2}\\
                0 & m\equiv j+1\pmod{2}
              \end{cases}.
  \]
\end{theorem}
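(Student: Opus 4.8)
The plan is to reduce the geometric count to a purely algebraic count of alternating bilinear forms, via the standard ``graph'' parametrisation of the generators opposite to a fixed one. First I would fix a hyperbolic basis of the underlying $(2m+2)$-dimensional vector space $V$ adapted to both generators. Writing $\mu=\pi_{1}\cap\pi_{2}$ (of vector dimension $j+1$), one can split $V=H_{1}\perp H_{2}$ orthogonally, where $H_{1}$ is the hyperbolic space spanned by a basis $e_{0},\dots,e_{j}$ of $\mu$ together with a dual system $f_{0},\dots,f_{j}$, and $H_{2}=\langle a_{1},\dots,a_{m-j},b_{1},\dots,b_{m-j}\rangle$ is hyperbolic with $\pi_{1}=\langle e_{i},a_{k}\rangle$ and $\pi_{2}=\langle e_{i},b_{k}\rangle$. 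The subspace $\pi_{1}'=\langle f_{i},b_{k}\rangle$ is then a generator complementary to $\pi_{1}$.

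Next I would invoke the classical correspondence: every generator $\sigma$ skew to $\pi_{1}$ is the graph $\{u+Tu:u\in\pi_{1}'\}$ of a linear map $T\colon\pi_{1}'\to\pi_{1}$, and total isotropy of $\sigma$ translates, via $\beta(u,w):=B(Tu,w)$ (with $B$ the polar form of the quadratic form), exactly into the condition that $\beta$ is an alternating bilinear form on $\pi_{1}'$. Since $\dim\pi_{1}'=m+1$, this recovers the known count $q^{\binom{m+1}{2}}$ of generators skew to $\pi_{1}$. The heart of the argument is then to express the second condition $\sigma\cap\pi_{2}=\{0\}$ in terms of $\beta$. A short computation in the chosen coordinates shows that a nonzero vector of $\sigma$ lies in $\pi_{2}$ precisely when there is a nonzero $u\in B_{0}:=\langle b_{1},\dots,b_{m-j}\rangle$ with $\beta(u,b_{k})=0$ for all $k$; that is,
\[
  \sigma\cap\pi_{2}=\{0\}\iff \beta|_{B_{0}\times B_{0}}\text{ is nondegenerate.}
\]

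It then remains to count alternating forms $\beta$ on $\pi_{1}'=B_{0}\oplus F_{0}$, with $F_{0}=\langle f_{0},\dots,f_{j}\rangle$, whose restriction to $B_{0}$ is nondegenerate. Splitting $\beta$ into its blocks on $B_{0}\times B_{0}$, on $F_{0}\times F_{0}$ and the cross block on $B_{0}\times F_{0}$, the last two blocks are unconstrained and contribute $q^{\binom{j+1}{2}}$ and $q^{(m-j)(j+1)}$ respectively, while the $B_{0}$-block must be a nondegenerate alternating form on an $(m-j)$-dimensional space. Here the parity dichotomy appears automatically: such a form exists only when $m-j$ is even, i.e. $m\equiv j\pmod 2$, which forces $b^{m}_{j}=0$ otherwise. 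When $m-j=2s$ is even, the number of nondegenerate alternating forms equals $|\mathrm{GL}(2s,q)|/|\mathrm{Sp}(2s,q)|=q^{s(s-1)}\prod_{i=1}^{s}(q^{2i-1}-1)$ with $s=(m-j)/2$. Multiplying the three contributions and simplifying the exponent of $q$ yields $2\binom{(m+j)/2+1}{2}-\binom{j+1}{2}$, matching the stated formula.

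The main obstacle I anticipate is the bookkeeping in the second step: correctly identifying, in the graph parametrisation, which alternating forms produce a $\sigma$ meeting $\pi_{2}$, and verifying that this is \emph{exactly} the degeneracy locus of $\beta|_{B_{0}}$ rather than some larger or incomparable condition. A secondary technical point is characteristic $2$, where the polar form is alternating and the passage between the quadratic form and $B$ must be handled with care; I would either work with the quadratic form throughout or remark that the counting of totally isotropic graphs is insensitive to the characteristic. The final exponent comparison is a routine but slightly delicate algebraic identity, best verified by substituting $m=j+2s$.
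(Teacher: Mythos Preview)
The paper does not prove this statement at all: it is quoted as \cite[Corollary~5]{kms} and used as a black box, so there is no ``paper's own proof'' to compare your proposal against.

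That said, your argument is correct and is essentially the standard linear-algebra proof. The coordinate setup works because $\langle\pi_{1},\pi_{2}\rangle=\mu^{\perp}$ (a dimension count), and $\pi_{1}/\mu$, $\pi_{2}/\mu$ are complementary maximal totally singular subspaces of the nondegenerate quotient $\mu^{\perp}/\mu$; Witt's extension theorem then lets you realise the hyperbolic basis exactly as you describe. The identification of generators skew to $\pi_{1}$ with alternating forms on $\pi_{1}'$ is the classical parametrisation, and your key computation---that $u+Tu\in\pi_{2}$ forces $u\in B_{0}$ and $\beta(u,b_{l})=0$ for all $l$---is right: the $f_{i}$-coefficients of $u$ must vanish to land in $\langle e_{i},b_{k}\rangle$, and the $a_{k}$-coefficients of $Tu$ are read off as $B(Tu,b_{k})=\beta(u,b_{k})$ via the hyperbolic pairing on $H_{2}$. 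So the condition $\sigma\cap\pi_{2}=0$ is \emph{exactly} nondegeneracy of $\beta|_{B_{0}}$, with no spurious extra constraints. Your worry about characteristic~$2$ is handled by working with the quadratic form $Q$: the condition $Q(u+Tu)=0$ gives $\beta(u,u)=0$ directly, and polarising gives skew-symmetry, so $\beta$ is alternating in all characteristics. The final count and the exponent identity
\[
  \tfrac{(m-j)}{2}\bigl(\tfrac{m-j}{2}-1\bigr)+\tbinom{j+1}{2}+(m-j)(j+1)=2\tbinom{(m+j)/2+1}{2}-\tbinom{j+1}{2}
\]
check out (substitute $m=j+2s$). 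The only cosmetic suggestion is to state explicitly at the outset that Witt's theorem justifies the chosen normal form for $(\pi_{1},\pi_{2})$.
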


Now, we present a new counting result.
\comments{
\begin{lemma}
  Let $\mathcal{Q}^{+}(2m+1,q)$ be a hyperbolic quadric and let $\pi_{1}$ and $\pi_{2}$ be two generators of the same class on $\mathcal{Q}^{+}(2m+1,q)$ meeting in a $j$-dimensional space, $j\equiv m\pmod{2}$. The number of generators meeting $\pi_{1}$, but not $\pi_{2}$ equals
  \[
    v^{m}_{j}:=\sum^{\left\lfloor\frac{m-1}{2}\right\rfloor}_{i=\left\lceil\frac{j}{2}\right\rceil}q^{(m-2i)(j+1)}\gs{m-j}{m-2i}{q}b^{2i}_{j}\;.
  \]
\end{lemma}
\begin{proof}
  All generators belonging to the same class as $\pi_{1}$ and $\pi_{2}$, meet both, hence cannot meet precisely one of them. Let $\pi$ be a generator of the other class that meets $\pi_{1}$. The intersection $\tau=\pi_{1}\cap\pi$ is an $(m-2i-1)$-space, for an $i$ fulfilling $\frac{j}{2}\leq i\leq\frac{m-1}{2}$. Let $\overline{\tau}$ be the tangent space in $\tau$ to $\mathcal{Q}^{+}(2m+1,q)$; it is $(m+2i+1)$-dimensional. The tangent space $\tau$ contains $\pi_{1}$ and meets $\pi_{2}$ in a $(2i)$-space through $\pi_{1}\cap\pi_{2}$. The intersection $\overline{\tau}\cap\mathcal{Q}^{+}(2m+1,q)$ is a cone with vertex $\tau$ and base a hyperbolic quadric $\mathcal{Q}^{+}(4i+1,q)$. We can choose the ambient space $\sigma$ of this $\mathcal{Q}^{+}(4i+1,q)$ to contain $\overline{\tau}\cap\pi_{2}$. 
  \par Any generator through $\tau$ now corresponds to a generator of this base quadric $\mathcal{Q}^{+}(4i+1,q)$. Moreover, a generator through $\tau$ meeting $\pi_{1}$ in $\tau$ and disjoint to $\pi_{2}$ corresponds to a generator of $\mathcal{Q}^{+}(4i+1,q)$ skew to both $\overline{\tau}\cap\pi_{2}=\sigma\cap\pi_{2}$ and $\sigma\cap\pi_{1}$, which are both generators of the base quadric. Since $(\sigma\cap\pi_{1})\cap(\sigma\cap\pi_{2})=\pi_{1}\cap\pi_{2}$, the number of such generators equals $b^{2i}_{j}$.
  \par So, the total number of generators meeting $\pi_{1}$, but not $\pi_{2}$ equals
  \[
    \sum^{\left\lfloor\frac{m-1}{2}\right\rfloor}_{i=\left\lceil\frac{j}{2}\right\rceil}q^{(m-2i)(j+1)}\gs{m-j}{m-2i}{q}b^{2i}_{j}\;.
  \]
  Here we used the result from Theorem \ref{skewsubspaces} to count the number of $(m-2i-1)$-spaces in $\pi_{1}$ that are skew to $\pi_{1}\cap\pi_{2}$.
\end{proof}
}
\begin{lemma}\label{meetingone}
  Let $\mathcal{Q}^{+}(4n+1,q)$ be a hyperbolic quadric and let $\pi_{1}$ and $\pi_{2}$ be two generators of the same class on $\mathcal{Q}^{+}(4n+1,q)$ meeting in a $j$-dimensional space, $0\leq j\leq 2n$ and $j$ even. The number of generators meeting $\pi_{1}$, but not $\pi_{2}$, equals
  \[
    v^{n}_{j}:=\sum^{n-1}_{i=\frac{j}{2}}q^{(2n-2i)(j+1)}\gs{2n-j}{2n-2i}{q}b^{2i}_{j}\;.
  \]
\end{lemma}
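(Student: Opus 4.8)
The plan is to reduce the count to the two counting results already available, namely Theorem~\ref{skewsubspaces} for skew subspaces of a projective space and Theorem~\ref{skewgenerators} for generators skew to two given generators of a hyperbolic quadric. First I would note that, since $m=2n$ is even, any generator belonging to the same class as $\pi_{1}$ and $\pi_{2}$ automatically meets both of them; hence any generator meeting $\pi_{1}$ but not $\pi_{2}$ must belong to the opposite class. For such a generator $\pi$, the intersection $\tau:=\pi\cap\pi_{1}$ has odd dimension, so I would write $\dim\tau=2n-2i-1$ and organise the whole count according to the value of $i$, the condition $\dim\tau\geq 0$ giving $i\leq n-1$. I will also fix the notation $\rho:=\pi_{1}\cap\pi_{2}$ for the $j$-dimensional base intersection.

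Next I would count the admissible choices of $\tau$. Because $\tau\subseteq\pi\cap\pi_{1}$ and I want $\pi$ skew to $\pi_{2}$, the space $\tau$ must be skew to $\pi_{2}$; as $\tau\subseteq\pi_{1}$, this is equivalent to $\tau$ being skew to $\rho$ inside $\pi_{1}$. Working in $\pi_{1}\cong\PG(2n,q)$ and applying Theorem~\ref{skewsubspaces} with ambient dimension $2n$, fixed subspace dimension $j$ and target dimension $2n-2i-1$, the number of such $\tau$ is exactly $q^{(2n-2i)(j+1)}\gs{2n-j}{2n-2i}{q}$.

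The heart of the argument, and the step I expect to be most delicate, is to count, for a fixed admissible $\tau$, the generators $\pi$ of the opposite class satisfying $\pi\cap\pi_{1}=\tau$ and $\pi\cap\pi_{2}=\emptyset$. I would pass to the tangent space $\overline{\tau}:=T_{\tau}(\mathcal{Q}^{+}(4n+1,q))$, which is a $(2n+2i+1)$-space meeting $\mathcal{Q}^{+}(4n+1,q)$ in a cone with vertex $\tau$ and base a hyperbolic quadric $\mathcal{Q}^{+}(4i+1,q)$ in a complement $\sigma$ of $\tau$ in $\overline{\tau}$. Two inclusions must be checked carefully: that $\pi_{1}\subseteq\overline{\tau}$ (since $\tau\subseteq\pi_{1}$ forces $\pi_{1}\subseteq\tau^{\perp}=\overline{\tau}$), and that $\rho\subseteq\overline{\tau}$ (since $\rho$ and $\tau$ both lie in the totally isotropic space $\pi_{1}$ they are perpendicular, so $\rho\subseteq\tau^{\perp}$). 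Choosing $\sigma$ to contain $\overline{\tau}\cap\pi_{2}$, one gets that $\sigma\cap\pi_{1}$ and $\sigma\cap\pi_{2}$ are two generators of the base quadric meeting exactly in $\rho$, a $j$-space. Generators of $\mathcal{Q}^{+}(4n+1,q)$ through $\tau$ correspond bijectively to generators of the base quadric, and under this correspondence the conditions $\pi\cap\pi_{1}=\tau$ and $\pi\cap\pi_{2}=\emptyset$ translate precisely into the base generator of $\pi$ being skew to both $\sigma\cap\pi_{1}$ and $\sigma\cap\pi_{2}$; verifying this translation in both directions is the main obstacle. By Theorem~\ref{skewgenerators} applied to the base quadric $\mathcal{Q}^{+}(4i+1,q)$ (so its parameter equals $2i$) with two generators meeting in a $j$-space, the number of such generators equals $b^{2i}_{j}$.

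Finally I would multiply the two counts and sum over $i$. The range is pinned down by the condition that $b^{2i}_{j}\neq 0$: the parity requirement $2i\equiv j\pmod 2$ is guaranteed by $j$ being even, while the defining product of $b^{2i}_{j}$ forces $2i\geq j$, i.e.\ $i\geq \tfrac{j}{2}$, which is exactly the condition that the base quadric is large enough to carry two generators meeting in a $j$-space. Combined with $i\leq n-1$ this yields
\[
  v^{n}_{j}=\sum^{n-1}_{i=\frac{j}{2}}q^{(2n-2i)(j+1)}\gs{2n-j}{2n-2i}{q}b^{2i}_{j}\,,
\]
as claimed.
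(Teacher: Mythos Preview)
Your proposal is correct and follows essentially the same route as the paper's proof: restrict to the opposite generator class, stratify by $\dim(\pi\cap\pi_{1})=2n-2i-1$, count the admissible $\tau$ via Theorem~\ref{skewsubspaces}, and for each $\tau$ pass to the tangent cone to reduce to Theorem~\ref{skewgenerators} in the base $\mathcal{Q}^{+}(4i+1,q)$. Your write-up is in fact slightly more explicit than the paper's (you spell out why $\pi_{1},\rho\subseteq\overline{\tau}$ and flag the bijection check); the only cosmetic difference is that the lower bound $i\geq j/2$ really comes from the requirement that $\tau$ be skew to $\rho$ inside $\pi_{1}$ (equivalently, that $\gs{2n-j}{2n-2i}{q}\neq 0$), rather than from the formula for $b^{2i}_{j}$, but this does not affect the argument.
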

\begin{proof}
  All generators belonging to the same class as $\pi_{1}$ and $\pi_{2}$, meet both, hence cannot meet precisely one of them. Let $\pi$ be a generator of the other class that meets $\pi_{1}$. The intersection $\tau=\pi_{1}\cap\pi$ is a $(2n-2i-1)$-space, for an $i$ fulfilling $\frac{j}{2}\leq i\leq n-1$. Let $\overline{\tau}$ be the tangent space in $\tau$ to $\mathcal{Q}^{+}(4n+1,q)$; it is $(2n+2i+1)$-dimensional. The tangent space $\overline{\tau}$ contains $\pi_{1}$ and meets $\pi_{2}$ in a $(2i)$-space through $\pi_{1}\cap\pi_{2}$. The intersection $\overline{\tau}\cap\mathcal{Q}^{+}(4n+1,q)$ is a cone with vertex $\tau$ and base a hyperbolic quadric $\mathcal{Q}^{+}(4i+1,q)$. We can choose the ambient space $\sigma$ of this base $\mathcal{Q}^{+}(4i+1,q)$ to contain $\overline{\tau}\cap\pi_{2}$. 
  \par Any generator through $\tau$ now corresponds to a generator of this base quadric $\mathcal{Q}^{+}(4i+1,q)$. Moreover, a generator through $\tau$ meeting $\pi_{1}$ in $\tau$ and disjoint to $\pi_{2}$ corresponds to a generator of $\mathcal{Q}^{+}(4i+1,q)$ skew to both $\overline{\tau}\cap\pi_{2}=\sigma\cap\pi_{2}$ and $\sigma\cap\pi_{1}$, which both are generators of the base quadric. Since $(\sigma\cap\pi_{1})\cap(\sigma\cap\pi_{2})=\pi_{1}\cap\pi_{2}$, the number of such generators equals $b^{2i}_{j}$.
  \par So, the total number of generators meeting $\pi_{1}$, but not $\pi_{2}$, equals
  \[
    \sum^{n-1}_{i=\frac{j}{2}}q^{(2n-2i)(j+1)}\gs{2n-j}{2n-2i}{q}b^{2i}_{j}\;.
  \]
  Here we used the result from Theorem \ref{skewsubspaces} to count the number of $(2n-2i-1)$-spaces in $\pi_{1}$ that are skew to $\pi_{1}\cap\pi_{2}$.
\end{proof}

\begin{corollary}\label{wnt}
  Let $\mathcal{Q}^{+}(4n+1,q)$ be a hyperbolic quadric and let $\pi_{1}$ and $\pi_{2}$ be two generators of the same class on $\mathcal{Q}^{+}(4n+1,q)$ meeting in a $2(n-t)$-dimensional space, $0\leq t\leq n$. The number of generators not meeting both $\pi_{1}$ and $\pi_{2}$ equals
  \[
    W^{n}_{t}(q):=q^{2n^{2}+n-t^{2}}\left(\prod^{t}_{k=1}(q^{2k-1}-1)+2\sum^{t}_{i=1}\gs{2t}{2i}{q}q^{i^{2}+i-2it}\prod^{t-i}_{k=1}(q^{2k-1}-1)\right)\;.
  \]
\end{corollary}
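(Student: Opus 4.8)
The plan is to partition the generators according to how they meet $\pi_{1}$ and $\pi_{2}$, and to recognise that ``not meeting both'' means ``missing at least one.'' Since $\pi_{1}$ and $\pi_{2}$ lie in the same class and meet in a $2(n-t)$-space, every generator of their class meets both of them (two generators of the same class intersect in a space of dimension $\equiv m\pmod 2$, hence of nonnegative dimension when $m=2n$ is even); so a generator failing to meet both must lie in the other class and falls into exactly one of three types: it meets $\pi_{1}$ but not $\pi_{2}$, it meets $\pi_{2}$ but not $\pi_{1}$, or it is skew to both. I would set $j=2(n-t)$, so that $\pi_{1}\cap\pi_{2}$ is $j$-dimensional with $j$ even, matching the hypotheses of the earlier results.

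First I would count the two ``exactly one'' types. By Lemma \ref{meetingone} the number of generators meeting $\pi_{1}$ but not $\pi_{2}$ is $v^{n}_{j}$, and by the symmetry between $\pi_{1}$ and $\pi_{2}$ (the configuration is unchanged under swapping them) the number meeting $\pi_{2}$ but not $\pi_{1}$ is again $v^{n}_{j}$. Next I would count the generators skew to both: since $m=2n\equiv j\pmod 2$, Theorem \ref{skewgenerators} gives this number as $b^{2n}_{j}$. Adding the three contributions yields $W^{n}_{t}(q)=2v^{n}_{j}+b^{2n}_{j}$, and the remainder is to show this equals the claimed closed form.

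The substantive part is the algebraic simplification. For the skew term I would substitute $m=2n$ and $j=2(n-t)$ into the definition of $b^{m}_{j}$: the product becomes $\prod_{k=1}^{t}(q^{2k-1}-1)$ since $(m-j)/2=t$, and expanding the exponent $2\binom{(m+j)/2+1}{2}-\binom{j+1}{2}=(2n-t+1)(2n-t)-(2n-2t+1)(n-t)$ collapses to $2n^{2}+n-t^{2}$, so that $b^{2n}_{2(n-t)}=q^{2n^{2}+n-t^{2}}\prod_{k=1}^{t}(q^{2k-1}-1)$, which is exactly the first summand inside the parentheses. For the $v^{n}_{j}$ term I would re-index the sum by writing $i=n-s$, turning the range $\tfrac{j}{2}\le i\le n-1$ into $1\le s\le t$; then $2n-j=2t$ and $2n-2i=2s$, so the Gaussian coefficient becomes $\gs{2t}{2s}{q}$ and the product in $b^{2(n-s)}_{2(n-t)}$ becomes $\prod_{k=1}^{t-s}(q^{2k-1}-1)$. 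Collecting the three $q$-exponents---the factor $2s(2n-2t+1)$ from the skew-subspace count of Theorem \ref{skewsubspaces}, the exponent of $b^{2(n-s)}_{2(n-t)}$, which is $(2n-s-t+1)(2n-s-t)-(2n-2t+1)(n-t)$---I would verify that they sum to $2n^{2}+n-t^{2}+s^{2}+s-2st$. Factoring out $q^{2n^{2}+n-t^{2}}$ then reproduces the summand $\gs{2t}{2s}{q}q^{s^{2}+s-2st}\prod_{k=1}^{t-s}(q^{2k-1}-1)$, with $s$ playing the role of the index $i$ in the stated formula.

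The main obstacle is precisely this exponent bookkeeping: one must carefully expand $2s(2n-2t+1)+(2n-s-t+1)(2n-s-t)-(2n-2t+1)(n-t)$ and confirm that all cross terms in $ns$ and $nt$ cancel, leaving only $2n^{2}+n-t^{2}+s^{2}+s-2st$. Once this identity and the evaluation of $b^{2n}_{2(n-t)}$ are established, factoring $q^{2n^{2}+n-t^{2}}$ out of all three parts and the factor $2$ out of the two equal $v^{n}_{j}$ contributions gives the stated expression for $W^{n}_{t}(q)$.
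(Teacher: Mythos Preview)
Your proposal is correct and follows essentially the same route as the paper: the paper also writes $W^{n}_{t}(q)=b^{2n}_{2(n-t)}+2v^{n}_{2(n-t)}$, re-indexes the sum in $v^{n}_{2(n-t)}$ via $i\mapsto n-i$ (your $s$), substitutes the explicit form of $b^{2(n-i)}_{2(n-t)}$, and factors out $q^{2n^{2}+n-t^{2}}$. Your write-up in fact spells out the exponent bookkeeping more carefully than the paper does.
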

\begin{proof}
   Using the notations from Theorem \ref{skewgenerators} and Lemma \ref{meetingone}, we find that the number of generators not meeting both $\pi_{1}$ and $\pi_{2}$ equals $b^{2n}_{2(n-t)}+2v^{n}_{2(n-t)}$. Using the results from Theorem \ref{skewgenerators} and Lemma \ref{meetingone}, we find that
   \begin{align*}
     W^{n}_{t}(q)&=b^{2n}_{2(n-t)}+2v^{n}_{2(n-t)}\\
     &=q^{2n^{2}+n-t^{2}}\prod^{t}_{k=1}(q^{2k-1}-1)+2\sum^{n-1}_{i=n-t}q^{(2n-2i)(2n-2t+1)}\gs{2t}{2n-2i}{q}b^{2i}_{2(n-t)}\\
     &=q^{2n^{2}+n-t^{2}}\prod^{t}_{k=1}(q^{2k-1}-1)+2\sum^{t}_{i=1}q^{2i(2n-2t+1)}\gs{2t}{2i}{q}b^{2(n-i)}_{2(n-t)}\\
     &=q^{2n^{2}+n-t^{2}}\prod^{t}_{k=1}(q^{2k-1}-1)+2\sum^{t}_{i=1}q^{2i(2n-2t+1)}\gs{2t}{2i}{q}q^{2(n-i)^{2}+(n-i)-(t-i)^{2}}\prod^{t-i}_{k=1}(q^{2k-1}-1)\\
     &=q^{2n^{2}+n-t^{2}}\left(\prod^{t}_{k=1}(q^{2k-1}-1)+2\sum^{t}_{i=1}\gs{2t}{2i}{q}q^{i^{2}+i-2it}\prod^{t-i}_{k=1}(q^{2k-1}-1)\right)\;.\qedhere
   \end{align*}
\end{proof}

\section{Classification results}
\label{sec:classificationresults}

\begin{example}\label{secondexample}
  Let $\pi$ be a generator of the hyperbolic quadric $\mathcal{Q}^{+}(4n+1,q)$ and let $\mathcal{G}$ be the generator class not containing $\pi$. Now, let $\mathcal{S}$ be the set containing $\pi$ and all generators of $\mathcal{G}$ that meet $\pi$. All elements of $\mathcal{S}\setminus\{\pi\}$ meet $\pi$ and since all generators of the same class have a nontrivial intersection on this hyperbolic quadric, they also meet each other. Hence, $\mathcal{S}$ is an Erd\H{o}s-Ko-Rado set. None of the generators in $\mathcal{G}\setminus \mathcal{S}$ extends $\mathcal{S}$ to a larger Erd\H{o}s-Ko-Rado set. Indeed a generator in $\mathcal{G}\setminus \mathcal{S}$ is disjoint from $\pi$. Also, for every generator $\pi'$ in the same class of $\pi$ we can find a generator in $\mathcal{S}$ not meeting $\pi'$. Consequently, this Erd\H{o}s-Ko-Rado set is maximal.
  \par The number of generators in $\mathcal{S}$ equals $(|\mathcal{G}|-b^{2n}_{2n})+1=\prod^{2n}_{i=1}(q^{i}+1)-q^{2n^{2}+n}+1$.
\end{example}

\begin{lemma}\label{boundonthird}
  Let $\mathcal{S}$ be a maximal Erd\H{o}s-Ko-Rado set of generators of a hyperbolic quadric $\mathcal{Q}^{+}(4n+1,q)$. If $\mathcal{S}$ is not an Erd\H{o}s-Ko-Rado set as described in Example \ref{oneclass} or Example \ref{secondexample}, then it contains at most $2\prod^{2n}_{k=1}(q^{k}+1)-2\min\{W^{n}_{t}(q)\mid1\leq t\leq n\}$ generators.
\end{lemma}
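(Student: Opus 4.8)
The plan is to prove the bound by splitting $\mathcal{S}$ according to the two generator classes and bounding each part separately, so that the factor $2$ in the claimed bound arises from adding the two contributions. Throughout, write $N=\prod^{2n}_{k=1}(q^{k}+1)$ for the common size of a generator class; recall that the total number of generators equals $2N$, and that two generators lie in the same class if and only if their intersection has even projective dimension (in particular, two generators of the same class are never disjoint).

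The first step is to extract structural information from maximality together with the hypothesis that $\mathcal{S}$ is neither Example \ref{oneclass} nor Example \ref{secondexample}. If $\mathcal{S}$ were contained in a single class, then, since any generator of that class meets every element of $\mathcal{S}$, maximality would force $\mathcal{S}$ to be the whole class, i.e.\ Example \ref{oneclass}; hence $\mathcal{S}$ contains generators of both classes. Denote by $\mathcal{L}$ and $\mathcal{G}_{0}$ the sets of generators of $\mathcal{S}$ in the two classes, both nonempty. Next I would argue that $|\mathcal{L}|\geq2$ and $|\mathcal{G}_{0}|\geq2$: if, say, $\mathcal{L}=\{\pi\}$, then every element of $\mathcal{G}_{0}$ meets $\pi$, and conversely any generator of the opposite class through $\pi$ meets $\pi$ and, being in the same class as the elements of $\mathcal{G}_{0}$, meets all of them; so by maximality $\mathcal{G}_{0}$ consists of \emph{all} opposite-class generators meeting $\pi$, which is precisely Example \ref{secondexample}, a contradiction. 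By symmetry both classes contribute at least two generators.

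The second step is the counting. Choose two distinct generators $\pi_{1},\pi_{2}\in\mathcal{L}$; being distinct generators of the same class, they meet in a space of even dimension $2(n-t_{1})$ with $1\leq t_{1}\leq n$. The crucial observation is that the quantity $W^{n}_{t_{1}}(q)=b^{2n}_{2(n-t_{1})}+2v^{n}_{2(n-t_{1})}$ of Corollary \ref{wnt} counts only generators of the class \emph{opposite} to $\pi_{1},\pi_{2}$: a generator disjoint from $\pi_{1}$ meets it in the empty space (projective dimension $-1$, which is odd) and hence lies in the opposite class, and by the proof of Lemma \ref{meetingone} a generator meeting exactly one of $\pi_{1},\pi_{2}$ also lies in the opposite class. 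Consequently the number of opposite-class generators meeting both $\pi_{1}$ and $\pi_{2}$ equals $N-W^{n}_{t_{1}}(q)$. Since every element of $\mathcal{G}_{0}$ meets both $\pi_{1}$ and $\pi_{2}$, we obtain $|\mathcal{G}_{0}|\leq N-W^{n}_{t_{1}}(q)$. Applying the same argument to a pair of distinct generators $\sigma_{1},\sigma_{2}\in\mathcal{G}_{0}$ meeting in a $2(n-t_{2})$-space, $1\leq t_{2}\leq n$, yields $|\mathcal{L}|\leq N-W^{n}_{t_{2}}(q)$.

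Finally, adding the two estimates gives
\[
  |\mathcal{S}|=|\mathcal{L}|+|\mathcal{G}_{0}|\leq 2N-W^{n}_{t_{1}}(q)-W^{n}_{t_{2}}(q)\leq 2N-2\min\{W^{n}_{t}(q)\mid1\leq t\leq n\}\;,
\]
which is the claimed bound. I expect the only delicate point to be the maximality bookkeeping in the first step --- in particular the clean identification of the case $|\mathcal{L}|=1$ with Example \ref{secondexample} --- whereas the counting step is immediate once one notices that $W^{n}_{t}(q)$ enumerates exclusively opposite-class generators, so that each class can be bounded independently by $N$ minus one copy of $W^{n}_{t}(q)$.
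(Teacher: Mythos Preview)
Your proof is correct and follows essentially the same approach as the paper's own proof: show that $\mathcal{S}$ must contain at least two generators of each class, then bound each class separately by $N-W^{n}_{t}(q)$ using Corollary~\ref{wnt}, and add. The paper's proof is terser---it simply asserts that $\mathcal{S}$ ``contains at least two distinct generators of both classes'' and that ``the statement immediately follows''---whereas you spell out the maximality bookkeeping (identifying the single-class case with Example~\ref{oneclass} and the single-generator case with Example~\ref{secondexample}) and make explicit that $W^{n}_{t}(q)$ counts only opposite-class generators; these are exactly the details the paper leaves to the reader. One cosmetic slip: ``any generator of the opposite class through $\pi$'' should read ``meeting $\pi$'', but the intended meaning is clear.
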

\begin{proof}
  Since $\mathcal{S}$ differs from the Erd\H{o}s-Ko-Rado sets described in Example \ref{oneclass} and Example \ref{secondexample}, it contains at least two distinct generators of both classes. Let $\pi_{1},\pi_{2}\in \mathcal{S}$ be two generators of the same class, whose intersection is $2(n-t)$-dimensional, $t\geq1$. Then $\mathcal{S}$ contains at most
  \[
    \prod^{2n}_{k=1}(q^{k}+1)-W^{n}_{t}(q)
  \]
  generators of the other class. The statement immediately follows.
\end{proof}

\begin{notation}
  The function $f_{t}(q)$, $t\geq1$, is defined in the following way:
  \[
    f_{t}(q):=q^{t^{2}-t}\prod^{t}_{k=1}(q^{2k-1}-1)+2\sum^{t-1}_{i=0}\gs{2t}{2i}{q}q^{i^{2}-i}\prod^{i}_{k=1}(q^{2k-1}-1)\;.
  \]
  Using Corollary \ref{wnt}, we see that $W^{n}_{t}(q)=q^{(n+t)(2n-2t+1)}f_{t}(q)$.
\end{notation}

It should be noted that $f_{t}(q)$ is independent of $n$, however closely related to $W^{n}_{t}(q)$. We calculate $f_{t}(q)$ for some small values of $t$.
\begin{align*}
  f_{1}(q)&=q+1\\
  f_{2}(q)&=q^{6}+q^{5}+q^{3}-q^{2}\\
  f_{3}(q)&=q^{15}+q^{14}+q^{12}-q^{11}+q^{10}-q^{9}-q^{7}+q^{6}\\
\end{align*}

We prove an inequality between these functions.

\begin{lemma}\label{ftineq}
  For every $t\geq1$ and $q\geq2$, the inequality $f_{t+1}(q)>q^{4t+1}f_{t}(q)$ holds.
\end{lemma}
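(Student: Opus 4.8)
The plan is to find an explicit closed form for $f_t(q)$ and then verify the inequality by a direct substitution in which the dominant terms cancel exactly, leaving a manifestly positive remainder. Abbreviating $c_i:=q^{i^2-i}\prod_{k=1}^i(q^{2k-1}-1)$, the definition reads
\[
  f_t(q)=c_t+2\sum_{i=0}^{t-1}\gs{2t}{2i}{q}c_i\;,
\]
so the whole problem hinges on the identity
\[
  \sum_{i=0}^{t}\gs{2t}{2i}{q}c_i=q^{2t^2-t}\;,
\]
because the $i=t$ term of this sum is exactly $c_t$, whence $f_t(q)=c_t+2\bigl(q^{2t^2-t}-c_t\bigr)=2q^{2t^2-t}-c_t$.

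I would prove the identity combinatorially. First, $c_i$ is precisely the number of nondegenerate alternating bilinear forms on $\F_q^{2i}$: these form a single $GL_{2i}(q)$-orbit with stabiliser $Sp_{2i}(q)$, and in $c_i=|GL_{2i}(q)|/|Sp_{2i}(q)|$ the even factors of $|GL_{2i}(q)|$ cancel against $|Sp_{2i}(q)|=q^{i^2}\prod_{k=1}^i(q^{2k}-1)$, leaving exactly $q^{i^2-i}\prod_{k=1}^i(q^{2k-1}-1)$. Now I count all alternating bilinear forms on $\F_q^{2t}$ in two ways. On one hand there are $q^{\binom{2t}{2}}=q^{2t^2-t}$ of them, the strictly upper-triangular entries being free. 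On the other hand, every such form descends to a nondegenerate alternating form on the quotient by its radical, and the radical has even codimension $2i$ (the rank of an alternating form is even); grouping the forms according to their radical gives $\sum_{i=0}^{t}\gs{2t}{2i}{q}c_i$, since there are $\gs{2t}{2i}{q}$ subspaces of codimension $2i$ and $c_i$ nondegenerate forms on each $2i$-dimensional quotient. Equating the two counts yields the identity.

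With the closed form $f_t(q)=2q^{2t^2-t}-q^{t^2-t}\prod_{k=1}^t(q^{2k-1}-1)$ in hand, I would substitute into $f_{t+1}(q)-q^{4t+1}f_t(q)$. The contributions $2q^{2(t+1)^2-(t+1)}$ and $q^{4t+1}\cdot 2q^{2t^2-t}$ both equal $2q^{2t^2+3t+1}$ and cancel. Collecting the surviving terms over the common factor $\prod_{k=1}^t(q^{2k-1}-1)$ and using $\prod_{k=1}^{t+1}(q^{2k-1}-1)=(q^{2t+1}-1)\prod_{k=1}^t(q^{2k-1}-1)$ leaves
\[
  f_{t+1}(q)-q^{4t+1}f_t(q)=q^{t^2+t}\prod_{k=1}^t(q^{2k-1}-1)\;,
\]
which is strictly positive for $q\geq2$ since each factor $q^{2k-1}-1$ is positive; this is exactly the claimed inequality. (One can sanity-check the closed form against the listed values $f_1,f_2,f_3$.)

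The main obstacle is establishing the identity $\sum_{i=0}^t\gs{2t}{2i}{q}c_i=q^{2t^2-t}$; everything after it is bookkeeping. If one prefers to avoid the counting argument, the identity can instead be proved by induction on $t$, the inductive step being the recurrence $\sum_{i=0}^{t+1}\gs{2t+2}{2i}{q}c_i=q^{4t+1}\sum_{i=0}^{t}\gs{2t}{2i}{q}c_i$. The nuisance there is that the Pascal rule for Gaussian binomials introduces odd lower indices $\gs{2t}{2i-1}{q}$ that must be reorganised, so I expect the two-way count to be the cleaner route.
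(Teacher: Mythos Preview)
Your proof is correct and takes a genuinely different route from the paper. The paper argues directly from the definition of $f_{t+1}(q)$: it rewrites $\gs{2t+2}{2i}{q}$ in terms of $\gs{2t}{2i}{q}$, observes that the resulting coefficient $\frac{(q^{2t+2}-1)(q^{2t+1}-1)q^{2i}}{q^{2i+2}-1}$ exceeds $q^{4t+1}$ for $i\leq t-1$, and then checks that the leftover boundary terms are nonnegative for $q\geq2$. This is elementary but only yields the bare inequality.

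Your approach instead proves the closed form $f_t(q)=2q^{2t^2-t}-q^{t^2-t}\prod_{k=1}^t(q^{2k-1}-1)$ via the identity $\sum_{i=0}^{t}\gs{2t}{2i}{q}c_i=q^{2t^2-t}$, interpreted as the two-way count of alternating bilinear forms on $\F_q^{2t}$ (grouped by radical). This buys you an exact evaluation
\[
  f_{t+1}(q)-q^{4t+1}f_t(q)=q^{t^2+t}\prod_{k=1}^t(q^{2k-1}-1),
\]
which is strictly stronger than what the paper proves and makes positivity immediate. The trade-off is that you invoke the orders of $GL_{2i}(q)$ and $Sp_{2i}(q)$ and the rank-radical decomposition of alternating forms, which is outside machinery relative to the paper's self-contained manipulation; on the other hand, your closed form also explains the combinatorics behind $W^n_t(q)$ and would shorten later estimates.
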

\begin{proof}
  We carry out the following calculations.
  \begin{align*}
    f_{t+1}(q)&=q^{(t+1)^{2}-(t+1)}\prod^{t+1}_{k=1}(q^{2k-1}-1)+2\sum^{t}_{i=0}\gs{2t+2}{2i}{q}q^{i^{2}-i}\prod^{i}_{k=1}(q^{2k-1}-1)\\
    &=q^{2t}(q^{2t+1}-1)\left(q^{t^{2}-t}\prod^{t}_{k=1}(q^{2k-1}-1)\right)+2\\&\qquad+2\sum^{t}_{i=1}\frac{(q^{2t+2}-1)(q^{2t+1}-1)}{q^{2i}-1}\gs{2t}{2i-2}{q}q^{i^{2}-i}\prod^{i-1}_{k=1}(q^{2k-1}-1)\\
    &=q^{2t}(q^{2t+1}-1)\left(q^{t^{2}-t}\prod^{t}_{k=1}(q^{2k-1}-1)\right)+2\\&\qquad+2\sum^{t-1}_{i=0}\frac{(q^{2t+2}-1)(q^{2t+1}-1)q^{2i}}{q^{2i+2}-1}\gs{2t}{2i}{q}q^{i^{2}-i}\prod^{i}_{k=1}(q^{2k-1}-1)\\
  \end{align*}
  Note that
  \[
    \frac{(q^{2t+2}-1)(q^{2t+1}-1)q^{2i}}{q^{2i+2}-1}=q^{4t+1}+\frac{q^{4t+1}-q^{2t+2i+2}-q^{2t+2i+1}+q^{2i}}{q^{2i+2}-1}>q^{4t+1}
  \]
  since $i\leq t-1$.
  Substituting both the equality (for $i=t-1$) and the inequality in the previous calculation, we find
  \begin{align*}
     f_{t+1}(q)&>q^{2t}(q^{2t+1}-1)\left(q^{t^{2}-t}\prod^{t}_{k=1}(q^{2k-1}-1)\right)+2q^{4t+1}\sum^{t-1}_{i=0}\gs{2t}{2i}{q}q^{i^{2}-i}\prod^{i}_{k=1}(q^{2k-1}-1)\\&\qquad+2\frac{q^{4t+1}-q^{4t}-q^{4t-1}+q^{2t-2}}{q^{2t}-1}\gs{2t}{2t-2}{q}q^{(t-1)(t-2)}\prod^{t-1}_{k=1}(q^{2k-1}-1)\\
     &=q^{4t+1}f_{t}(q)-q^{t^{2}+t}\prod^{t}_{k=1}(q^{2k-1}-1)\\&\qquad+2\frac{q^{4t+1}-q^{4t}-q^{4t-1}+q^{2t-2}}{(q^{2}-1)(q-1)}q^{(t-1)(t-2)}\prod^{t}_{k=1}(q^{2k-1}-1)\\
     &>q^{4t+1}f_{t}(q)+\left(2(q^{4t-2}-q^{4t-4}-q^{4t-5})-q^{4t-2}\right)q^{(t-1)(t-2)}\prod^{t}_{k=1}(q^{2k-1}-1)\\
     &\geq q^{4t+1}f_{t}(q)\;.
  \end{align*}
  Here we used that $q^{4t-2}-2q^{4t-4}-2q^{4t-5}\geq0$ for all $q\geq2$.
\end{proof}

The following inequality was proven in \cite{d}.

\begin{lemma}[{\cite[Corollary 3.3]{d}}]\label{basicinequality}
  Let $s,t,q\in\N$ be such that $s\leq t$ and $q\geq3$. If $(s,q)\neq(0,3)$, then
  \[
    \prod^{t}_{i=s}(q^{i}+1)\leq (q^{s}+2)q^{\binom{t+1}{2}-\binom{s+1}{2}}\;.
  \]
\end{lemma}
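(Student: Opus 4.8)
The plan is to strip away the dependence on $t$ by a change of form, and then to treat the two regimes $s\ge 1$ and $s=0$ separately, with all the difficulty concentrated in the latter. The first step is to rewrite the exponent: since
\[
  \binom{t+1}{2}-\binom{s+1}{2}=s+(s+1)+\cdots+t=\sum_{i=s}^{t}i,
\]
we have $q^{\binom{t+1}{2}-\binom{s+1}{2}}=\prod_{i=s}^{t}q^{i}$. Dividing both sides of the asserted inequality by this quantity (and using $\tfrac{q^{i}+1}{q^{i}}=1+q^{-i}$, which gives the factor $2$ when $i=0$), the claim becomes equivalent to
\[
  \prod_{i=s}^{t}\left(1+q^{-i}\right)\le q^{s}+2.
\]
This is the statement I would actually prove. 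Its advantage is that the left-hand side is increasing in $t$ and bounded above by the corresponding infinite product, so the parameter $t$ effectively disappears and only the starting index $s$ and the base $q$ matter.

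Next I would bound the product using $1+x\le e^{x}$, which yields
\[
  \prod_{i=s}^{t}\left(1+q^{-i}\right)\le\exp\!\left(\sum_{i=s}^{t}q^{-i}\right)\le\exp\!\left(\frac{q^{1-s}}{q-1}\right),
\]
where the last step uses $\sum_{i=s}^{\infty}q^{-i}=q^{1-s}/(q-1)$. For $s\ge1$ and $q\ge3$ the exponent is at most $1/(q-1)\le\tfrac12$, so the product is at most $e^{1/2}<2\le q^{s}+2$, and the inequality holds comfortably (indeed $q^{s}+2\ge5$ here). This regime is essentially free.

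The case $s=0$ is where the real work lies, and it is exactly where the hypothesis $(s,q)\ne(0,3)$ becomes necessary. Here I would peel off the $i=0$ factor, which equals $2$, and bound the remainder by $\prod_{i=1}^{t}(1+q^{-i})\le\exp\!\left(1/(q-1)\right)$. With the pair $(0,3)$ excluded we have $q\ge4$, so $1/(q-1)\le\tfrac13$ and the target reduces to $2e^{1/3}\le 3$, i.e. $e\le(3/2)^{3}=27/8=3.375$, which is true. The main obstacle is precisely that this bound is genuinely tight: the factor $2$ from $i=0$ leaves almost no slack, so one must verify $\prod_{i=1}^{\infty}(1+q^{-i})\le\tfrac32$, which holds for $q\ge4$ but \emph{fails} for $q=3$, since $(1+\tfrac13)(1+\tfrac19)(1+\tfrac{1}{27})>\tfrac32$ already. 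Thus the care needed to make the exponential estimate quantitatively sharp enough (namely $e<27/8$) in the $s=0$ case, rather than any structural difficulty, is the crux of the argument.
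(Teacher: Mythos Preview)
Your argument is correct. The reduction to $\prod_{i=s}^{t}(1+q^{-i})\le q^{s}+2$ is valid, the exponential bound $1+x\le e^{x}$ combined with the geometric-series estimate handles $s\ge1$ with plenty of room, and for $s=0$, $q\ge4$ the verification $2e^{1/3}<3$ (equivalently $e<27/8$) is decisive. Your remark that the infinite product $\prod_{i\ge1}(1+3^{-i})$ already exceeds $3/2$ correctly explains why $(s,q)=(0,3)$ must be excluded.

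As for comparison with the paper: there is nothing to compare against. The paper does not prove this lemma; it quotes it verbatim from \cite[Corollary~3.3]{d} and uses it as a black box in the proof of Corollary~\ref{theinequality}. Your self-contained argument therefore supplies more than the paper does. If you want to situate it relative to the original source, the proof in \cite{d} proceeds by induction on $t$ rather than by passing to the infinite product and using $1+x\le e^{x}$; your approach is slightly slicker in that it removes $t$ from the picture in one stroke, at the cost of needing the numerical check $e<27/8$ at the end.
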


\begin{corollary}\label{theinequality}
  If $q\geq3$ and $n\geq1$, then $q^{2n^{2}+n}+2q^{2n^{2}+n-1}+1>\prod^{2n}_{k=1}(q^{k}+1)$.
\end{corollary}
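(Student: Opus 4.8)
The plan is to derive the bound directly from Lemma~\ref{basicinequality}, specialised to $s=1$ and $t=2n$. The hypotheses of that lemma require $s\leq t$, which holds since $n\geq1$ gives $2n\geq2>1$, and $(s,q)\neq(0,3)$, which is automatic here because $s=1\neq0$. It is worth noting that this is exactly the reason the present statement holds uniformly for every $q\geq3$ without having to exclude the value $q=3$: the troublesome case of Lemma~\ref{basicinequality} only arises when $s=0$, and we never need that endpoint.

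With this choice I would compute the exponent appearing in Lemma~\ref{basicinequality}, namely
\[
  \binom{t+1}{2}-\binom{s+1}{2}=\binom{2n+1}{2}-\binom{2}{2}=(2n^{2}+n)-1\;,
\]
and observe that the leading power $q^{2n^{2}+n}$ is precisely what one expects, since $\sum_{k=1}^{2n}k=2n^{2}+n$ is the total degree of the product. Since the prefactor is $q^{s}+2=q+2$, Lemma~\ref{basicinequality} yields
\[
  \prod_{k=1}^{2n}(q^{k}+1)\leq (q+2)\,q^{2n^{2}+n-1}=q^{2n^{2}+n}+2q^{2n^{2}+n-1}\;.
\]

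The final step is then immediate: the right-hand side of the claimed inequality is exactly this quantity with an extra summand $1$, so adding the strictly positive term $1$ converts the non-strict bound just obtained into the strict inequality
\[
  \prod_{k=1}^{2n}(q^{k}+1)\leq q^{2n^{2}+n}+2q^{2n^{2}+n-1}<q^{2n^{2}+n}+2q^{2n^{2}+n-1}+1\;,
\]
which is what we wanted to show. I do not anticipate any genuine obstacle here; the only points needing care are the routine arithmetic of the two binomial coefficients and the (trivial) verification that the hypotheses of Lemma~\ref{basicinequality} are met for $s=1$. The entire content of the corollary is really the observation that the single application of Lemma~\ref{basicinequality} lands exactly one unit below the target expression.
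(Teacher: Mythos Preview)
Your proposal is correct and follows essentially the same route as the paper: both apply Lemma~\ref{basicinequality} with $(s,t)=(1,2n)$, compute $\binom{2n+1}{2}-\binom{2}{2}=2n^{2}+n-1$, and then use the extra $+1$ to upgrade the non-strict bound to the desired strict inequality. Your write-up is in fact slightly cleaner, since the paper's displayed chain includes an intermediate term $-q^{\binom{2n}{2}}$ (presumably from a sharper form of the cited result) that is not needed for the conclusion.
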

\begin{proof}
  We apply Lemma \ref{basicinequality} for $(s,t)=(1,2n)$ and we find
  \[
    \prod^{2n}_{i=1}(q^{i}+1)\leq (q+2)q^{\binom{2n+1}{2}-1}-q^{\binom{2n}{2}}<(q+2)q^{2n^{2}+n-1}+1\;.\qedhere
  \]
\end{proof}

Now we prove the main theorem of this paper.

\begin{theorem}\label{maintheorem}
  The two largest types of maximal Erd\H{o}s-Ko-Rado sets of generators of a hyperbolic quadric $\mathcal{Q}^{+}(4n+1,q)$, $n\geq1$, $q\geq3$, are the Erd\H{o}s-Ko-Rado sets described in Example \ref{oneclass} and Example \ref{secondexample}.
\end{theorem}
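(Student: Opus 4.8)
The plan is to reduce everything to the two inequalities already prepared in the preceding lemmas. By Theorem~\ref{largest}, the Erd\H{o}s-Ko-Rado set of Example~\ref{oneclass} is the unique largest one, with $\prod_{k=1}^{2n}(q^{k}+1)$ generators. So it remains to show that every maximal Erd\H{o}s-Ko-Rado set other than those of Example~\ref{oneclass} and Example~\ref{secondexample} is strictly smaller than the set of Example~\ref{secondexample}, whose size is $\prod_{k=1}^{2n}(q^{k}+1)-q^{2n^{2}+n}+1$. The whole argument is then an assembly of Lemma~\ref{boundonthird}, Lemma~\ref{ftineq}, and Corollary~\ref{theinequality}.

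First I would pin down the minimum appearing in Lemma~\ref{boundonthird}. Using the identity $W^{n}_{t}(q)=q^{(n+t)(2n-2t+1)}f_{t}(q)$ and expanding the exponent as $(n+t)(2n-2t+1)=2n^{2}+n-2t^{2}+t$, the ratio of consecutive terms simplifies to $W^{n}_{t+1}(q)/W^{n}_{t}(q)=q^{-4t-1}f_{t+1}(q)/f_{t}(q)$. Lemma~\ref{ftineq} states exactly that this ratio exceeds $1$, so $W^{n}_{t}(q)$ is strictly increasing on $1\leq t\leq n$ and its minimum is attained at $t=1$. Since $f_{1}(q)=q+1$ and $(n+1)(2n-1)=2n^{2}+n-1$, this minimum equals $W^{n}_{1}(q)=q^{2n^{2}+n-1}(q+1)=q^{2n^{2}+n}+q^{2n^{2}+n-1}$.

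Next I would feed this into Lemma~\ref{boundonthird}: any maximal set $\mathcal{S}$ differing from both examples has at most $2\prod_{k=1}^{2n}(q^{k}+1)-2W^{n}_{1}(q)$ generators. The final step is the comparison of this bound with the size of Example~\ref{secondexample}, i.e. proving $2\prod_{k=1}^{2n}(q^{k}+1)-2W^{n}_{1}(q)<\prod_{k=1}^{2n}(q^{k}+1)-q^{2n^{2}+n}+1$. Substituting $W^{n}_{1}(q)$ and cancelling one copy of the product, this is equivalent to $\prod_{k=1}^{2n}(q^{k}+1)<q^{2n^{2}+n}+2q^{2n^{2}+n-1}+1$, which is precisely Corollary~\ref{theinequality} (and this is exactly the place where the hypothesis $q\geq3$ is consumed). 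This closes the argument.

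I expect no genuine obstacle to remain at this level: the substantive effort has already been absorbed into the counting of Corollary~\ref{wnt}, the monotonicity estimate of Lemma~\ref{ftineq}, and the product bound of Corollary~\ref{theinequality}. The only points demanding care are the two routine algebraic reductions — checking that $W^{n}_{t}$ is monotone so the minimum sits at $t=1$, and verifying that the size comparison collapses exactly onto Corollary~\ref{theinequality} — together with the sanity check that the term $q^{2n^{2}+n}$ subtracted in Example~\ref{secondexample} is indeed $b^{2n}_{2n}$, confirming that the two example sizes are correctly located relative to the bound.
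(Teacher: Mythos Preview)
Your proposal is correct and follows essentially the same route as the paper: bound via Lemma~\ref{boundonthird}, use Lemma~\ref{ftineq} to show $W^{n}_{t}(q)$ is increasing in $t$ so the minimum is $W^{n}_{1}(q)=(q+1)q^{2n^{2}+n-1}$, and then reduce the size comparison with Example~\ref{secondexample} to the inequality of Corollary~\ref{theinequality}. The only cosmetic differences are that you compute the ratio $W^{n}_{t+1}/W^{n}_{t}$ explicitly and invoke Theorem~\ref{largest} up front, whereas the paper writes the monotonicity as a one-line chain and simply notes that Example~\ref{oneclass} is obviously larger than Example~\ref{secondexample}.
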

\begin{proof}
  Let $\mathcal{S}$ be a maximal Erd\H{o}s-Ko-Rado set of generators different from the Erd\H{o}s-Ko-Rado sets described in Example \ref{oneclass} and Example \ref{secondexample}.  By Lemma \ref{boundonthird} we know that
  \[
    |\mathcal{S}|\leq 2\prod^{2n}_{k=1}(q^{k}+1)-2\min\{W^{n}_{t}(q)\mid1\leq t\leq n\}\;.
  \]
  Using Lemma \ref{ftineq}, we find that
  \[
    W^{n}_{t+1}(q)=q^{(n+t+1)(2n-2t-1)}f_{t+1}(q)>q^{(n+t)(2n-2t+1)}f_{t}(q)= W^{n}_{t}(q)\;,%q^{(n+t+1)(2n-2t-1)}q^{4t+1}f_{t}(q)
  \]
  for all $1\leq t\leq n$. Hence,
  \[
    \min\{W^{n}_{t}(q)\mid1\leq t\leq n\}=W^{n}_{1}(q)=(q+1)q^{2n^{2}+n-1}\;.
  \]
  \par It is clear that the Erd\H{o}s-Ko-Rado set described in Example \ref{oneclass} is larger than the one described in Example \ref{secondexample}, so we compare the upper bound on $|\mathcal{S}|$ with the size of the Erd\H{o}s-Ko-Rado set described in Example \ref{secondexample}. The corresponding inequality
  \[
    \prod^{2n}_{i=1}(q^{i}+1)-q^{2n^{2}+n}+1> 2\prod^{2n}_{k=1}(q^{k}+1)-2(q+1)q^{2n^{2}+n-1} 
  \]
  is equivalent to
  \[
    q^{2n^{2}+n}+2q^{2n^{2}+n-1}+1>\prod^{2n}_{k=1}(q^{k}+1)\;.
  \]
  By Lemma \ref{theinequality} we know that this inequality is valid if $q\geq3$.
\end{proof}

\begin{remark}
  In the previous theorem the case $q=2$ is omitted since the key inequality $q^{2n^{2}+n}+2q^{2n^{2}+n-1}+1>\prod^{2n}_{k=1}(q^{k}+1)$ is not true for $q=2$ if $n\geq 2$. E.g. $2^{10}+2\cdot2^{9}+1=2049<2295=\prod^{4}_{k=1}(2^{k}+1)$.
  \par For $n=1$, Theorem \ref{q+5q} implies the previous result (also in the case $q=2$).
\end{remark}

\comments{
\begin{lemma}\label{reducedproblem}
  The two largest types of maximal Erd\H{o}s-Ko-Rado sets of generators of a hyperbolic quadric $\mathcal{Q}^{+}(4n+1,q)$, $n\geq1$, are the Erd\H{o}s-Ko-Rado sets described in Example \ref{oneclass} and Example \ref{secondexample} if
  \[
    q^{2n^{2}+n}+2q^{2n^{2}+n-1}+1>\prod^{2n}_{k=1}(q^{k}+1)\;.
  \]
\end{lemma}
}

\section{Some other examples of large \texorpdfstring{Erd\H{o}s-Ko-Rado}{Erdos-Ko-Rado} sets}

Next to the two examples of maximal Erd\H{o}s-Ko-Rado sets of generators, presented in Example \ref{oneclass} and Example \ref{secondexample}, we also know the \emph{point-pencil}. This is the set of all generators through a fixed point. For many geometries, the point-pencil is the largest Erd\H{o}s-Ko-Rado set, e.g. for vector spaces (\cite{fw,tan}), for designs (\cite{ran}), for the polar spaces $\mathcal{Q}^{-}(2n+1,q)$, $\mathcal{W}(4n+3,q)$, ... (\cite{psv}). By Theorem \ref{largest} we know that this is not true for hyperbolic quadrics $\mathcal{Q}^{+}(4n+1,q)$. In this case the point-pencil contains
\[
  \prod^{2n-1}_{i=0}(q^{i}+1)=2\prod^{2n-1}_{i=1}(q^{i}+1)\in\Theta(q^{2n^{2}-n})
\]
generators. Recall that the Erd\H{o}s-Ko-Rado sets described in Example \ref{oneclass} and Example \ref{secondexample} contain $\prod^{2n}_{i=1}(q^{i}+1)\in\Theta(q^{2n^{2}+n})$ generators and $\prod^{2n}_{i=1}(q^{i}+1)-q^{2n^{2}+n}+1\in\Theta(q^{2n^{2}+n-1})$ generators, respectively. So, the point-pencil is much smaller in this case. Here we used the $\Theta$-notation: $f\in\Theta(g)$ iff $0<\lim_{x\to\infty}\frac{f(x)}{g(x)}<\infty$. In other terms, $f=kg+h$, with $k>0$ a real number and $\lim_{x\to\infty}\frac{h(x)}{g(x)}=0$.
\par In this section we will present some more Erd\H{o}s-Ko-Rado sets of generators of $\mathcal{Q}^{+}(4n+1,q)$ whose size is larger than the size of a point-pencil. First we give a counting result.

\begin{lemma}\label{generatorsskewtosubspace}
  Let $m\geq0$ and $k\geq -1$ be two integers such that $k<m$. Let $\Omega$ be one of the two generator classes of a hyperbolic quadric $\mathcal{Q}^{+}(2m+1,q)$. The number of generators in $\Omega$ skew to a fixed $k$-space on the quadric equals
  \[
    \frac{1}{2}\left(\prod^{m-k-1}_{i=0}(q^{i}+1)\right)q^{\frac{1}{2}(k+1)(2m-k)}=:w_{m,k}\;.
  \]
  The empty space is considered to have dimension $-1$.
\end{lemma}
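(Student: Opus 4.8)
The plan is to prove the formula by induction on $k$, using a recursion that reduces $m$ and $k$ simultaneously by comparing the generators skew to $\kappa$ with those skew to a hyperplane of $\kappa$. Write $w_{m,k}$ for the quantity to be computed and let $\kappa$ denote the fixed $k$-space on $\mathcal{Q}^{+}(2m+1,q)$. For the base case $k=-1$ the space $\kappa$ is empty, every generator of $\Omega$ is vacuously skew to it, so $w_{m,-1}=\frac{1}{2}\prod_{i=0}^{m}(q^{i}+1)$; this matches the claimed value since the exponent $\frac{1}{2}(k+1)(2m-k)$ vanishes and $\prod_{i=0}^{m-k-1}(q^{i}+1)=\prod_{i=0}^{m}(q^{i}+1)$.

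For the inductive step, fix a hyperplane $\kappa'$ of $\kappa$, that is, a $(k-1)$-space with $\kappa'\subset\kappa$. If a generator $\pi\in\Omega$ is skew to $\kappa'$ then $\dim(\pi\cap\kappa)\leq 0$, so every generator skew to $\kappa'$ is either skew to $\kappa$ or meets $\kappa$ in a single point lying in $\kappa\setminus\kappa'$. This partitions the generators of $\Omega$ skew to $\kappa'$ and yields
\[
  w_{m,k-1}=w_{m,k}+X,
\]
where $X$ counts the generators of $\Omega$ skew to $\kappa'$ that meet $\kappa$ in exactly one point. Since a generator through two points of $\kappa\setminus\kappa'$ would contain a line of $\kappa$ meeting the hyperplane $\kappa'$, the $q^{k}$ points of $\kappa\setminus\kappa'$ contribute disjoint families, and $X=\sum_{P\in\kappa\setminus\kappa'}Y_{P}$, where $Y_{P}$ is the number of generators of $\Omega$ through $P$ that are skew to $\kappa'$.

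The \emph{key step} is to evaluate $Y_{P}$ by passing to the quotient polar space at $P$. Since $\kappa$ is totally isotropic we have $\kappa'\subset P^{\perp}$ and $P\notin\kappa'$, so in $P^{\perp}/P\cong\mathcal{Q}^{+}(2m-1,q)$ the space $\kappa'$ projects injectively to a $(k-1)$-space $\overline{\kappa'}$ on the quotient quadric, and $\langle P,\kappa'\rangle=\kappa$ projects to the same subspace. A generator $\pi\ni P$ is skew to $\kappa'$ exactly when it meets $\kappa$ only in $P$, which after quotienting is precisely the condition that $\pi/P$ is skew to $\overline{\kappa'}$. Moreover the residue map is a bijection from the generators of $\Omega$ through $P$ onto one class of $\mathcal{Q}^{+}(2m-1,q)$: for $\pi_{1},\pi_{2}\ni P$ one has $\dim((\pi_{1}/P)\cap(\pi_{2}/P))=\dim(\pi_{1}\cap\pi_{2})-1$, so the class condition $\dim(\pi_{1}\cap\pi_{2})\equiv m\pmod 2$ is equivalent to $\dim((\pi_{1}/P)\cap(\pi_{2}/P))\equiv m-1\pmod 2$, i.e. to membership in a single class of the smaller quadric. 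Hence $Y_{P}=w_{m-1,k-1}$ for every $P$, so $X=q^{k}w_{m-1,k-1}$ and
\[
  w_{m,k}=w_{m,k-1}-q^{k}w_{m-1,k-1}.
\]

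It then remains to verify that the claimed closed form satisfies this recursion. Applying the induction hypothesis with second index $k-1$ once for $m$ and once for $m-1$ (both legitimate, since $0\leq k-1$ and $k-1<m-1$), a short computation shows that the powers of $q$ in $w_{m,k-1}$ and in $q^{k}w_{m-1,k-1}$ coincide, both equalling $\frac{1}{2}k(2m-k+1)$, so the two terms differ only through $\prod_{i=0}^{m-k}(q^{i}+1)-\prod_{i=0}^{m-k-1}(q^{i}+1)=q^{m-k}\prod_{i=0}^{m-k-1}(q^{i}+1)$; collecting the powers of $q$ produces exactly the exponent $\frac{1}{2}(k+1)(2m-k)$ together with the factor $\prod_{i=0}^{m-k-1}(q^{i}+1)$, which closes the induction. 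I expect the main obstacle to be the quotient argument for $Y_{P}$: one must check carefully both that skewness to $\kappa'$ transfers to skewness to $\overline{\kappa'}$ in the residue, and that the residue map carries the chosen class $\Omega$ bijectively onto a single class of $\mathcal{Q}^{+}(2m-1,q)$; the remaining manipulations are routine bookkeeping with the Gaussian and product expressions.
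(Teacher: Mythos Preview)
Your argument is correct and takes a genuinely different route from the paper. Both proofs proceed by induction on $k$ with the same base case, but the inductive steps differ. The paper partitions \emph{all} generators of $\Omega$ according to the dimension $i$ of their intersection with the $k$-space, obtaining
\[
  w_{m,k}=\tfrac{1}{2}\prod_{j=0}^{m}(q^{j}+1)-\sum_{i=0}^{k}\gs{k+1}{i+1}{q}\,w_{m-i-1,\,k-i-1},
\]
and then evaluates the sum via the $q$-binomial theorem. You instead compare generators skew to $\kappa$ with those skew to a fixed hyperplane $\kappa'\subset\kappa$, deriving the two-term recursion $w_{m,k}=w_{m,k-1}-q^{k}w_{m-1,k-1}$, which you verify directly against the closed form. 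Your approach is lighter: it avoids the full inclusion--exclusion sum and the $q$-binomial identity, trading them for a single quotient argument at a point $P\in\kappa\setminus\kappa'$ and an elementary product manipulation. The paper's approach, on the other hand, makes the connection to Lemma~\ref{meetingone} more transparent, since it reuses the same cone-over-a-base-quadric description. One small remark: your parenthetical ``since $0\leq k-1$'' should read $-1\leq k-1$; for $k=0$ the two values $w_{m,-1}$ and $w_{m-1,-1}$ are supplied by the base case rather than the inductive hypothesis, but the recursion and its verification go through unchanged.
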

\begin{proof}
  Let $\pi$ be a $k$-dimensional subspace of $\mathcal{Q}^{+}(2m+1,q)$. We prove this lemma by using induction on $k$. If $k=-1$, then $\pi$ is the empty space. The number of generators of $\Omega$ skew to the empty space is the total number of generators of $\Omega$, which equals $w_{m,-1}$.
  \par Now, we assume that the lemma is proved for all subspaces of dimension at most $k-1$; we will prove it for a $k$-dimensional space $\pi$. The subspace $\pi$ contains $\gs{k+1}{i+1}{q}$ subspaces of dimension $i$, $0\leq i\leq k$. Let $\sigma$ be such an $i$-space and let $T_{\sigma}(\mathcal{Q}^{+}(2m+1,q))$ be its tangent space to $\mathcal{Q}^{+}(2m+1,q)$. We know that $\mathcal{Q}^{+}(2m+1,q)\cap T_{\sigma}(\mathcal{Q}^{+}(2m+1,q))$ is a cone with vertex $\sigma$ and base a hyperbolic quadric $\mathcal{Q}^{+}(2m-2i-1,q)$. The $k$-space $\pi$ corresponds to a $(k-i-1)$-space in this base. Arguing as in the proof of Lemma \ref{meetingone}, the number of generators of $\Omega$ meeting $\pi$ in precisely $\sigma$ equals $w_{m-i-1,k-i-1}$. Here, we note that the generators of $\Omega$ through $\sigma$ correspond to one of the two classes of generators of the base $\mathcal{Q}^{+}(2m-2i-1,q)$.
  \par So, the total number of generators of $\Omega$ skew to $\pi$ is independent of the choice for $\pi$, and equals
  \begin{align*}
    w_{m,k}&=\frac{1}{2}\prod^{m}_{j=0}(q^{j}+1)-\sum^{k}_{i=0}\gs{k+1}{i+1}{q}w_{m-i-1,k-i-1}\\
    &=\frac{1}{2}\prod^{m}_{j=0}(q^{j}+1)-\frac{1}{2}\sum^{k}_{i=0}\gs{k+1}{i+1}{q}\left(\prod^{m-k-1}_{j=0}(q^{j}+1)\right)q^{\frac{1}{2}(k-i)(2m-k-i-1)}\\
    &=\frac{1}{2}\left(\prod^{m-k-1}_{j=0}(q^{j}+1)\right)\left(\prod^{m}_{j=m-k}(q^{j}+1)-\sum^{k}_{i=0}\gs{k+1}{i+1}{q}q^{\frac{1}{2}(k-i)(2m-k-i-1)}\right)\\
    &=\frac{1}{2}\left(\prod^{m-k-1}_{j=0}(q^{j}+1)\right)\left(\prod^{m}_{j=m-k}(q^{j}+1)-\sum^{k+1}_{i=1}\gs{k+1}{i}{q}q^{\frac{1}{2}(k-i+1)(2m-k-i)}\right)\\
    &=\frac{1}{2}\left(\prod^{m-k-1}_{j=0}(q^{j}+1)\right)\left(\prod^{m}_{j=m-k}(q^{j}+1)-q^{\frac{1}{2}(2m-k)(k+1)}\sum^{k+1}_{i=1}\gs{k+1}{i}{q}q^{\binom{i}{2}}q^{-mi}\right)\\
    &=\frac{1}{2}\left(\prod^{m-k-1}_{j=0}(q^{j}+1)\right)\left(\prod^{m}_{j=m-k}(q^{j}+1)-q^{\frac{1}{2}(2m-k)(k+1)}\left(\prod^{k}_{j=0}(q^{j-m}+1)-1\right)\right)\\
    &=\frac{1}{2}\left(\prod^{m-k-1}_{j=0}(q^{j}+1)\right)q^{\frac{1}{2}(k+1)(2m-k)}\;.\\
  \end{align*}
  In the penultimate transition we used the $q$-binomial theorem
  \[
    \prod^{n-1}_{l=0}(1+q^{l}t)=\sum^{n}_{l=0}q^{\binom{l}{2}}\gs{n}{l}{q}t^{l}\;.
  \]
  This calculation finishes the induction step.
\end{proof}

\begin{remark}\label{skewtogenerator}
  In the previous theorem, the case $m=k$ was not covered; in this case we count the number of generators of a fixed class skew to a given generator $\pi$. We already know that this number will be dependent on the class of $\pi$ and the parity of $m$. Using the observation before Example \ref{oneclass} and Theorem \ref{skewgenerators}, we can state the following result. If $m$ is even, then no generators of the class of $\pi$ are skew to $\pi$ and $b^{m}_{m}=q^{\binom{m+1}{2}}$ generators of the other class are skew to $\pi$. If $m$ is odd, then $b^{m}_{m}=q^{\binom{m+1}{2}}$ generators of the class of $\pi$ are skew to $\pi$ and no generators of the other class are skew to $\pi$.
  \par It is an immediate consequence of the previous theorem that the total number of generators skew to a fixed $k$-space on a hyperbolic quadric $\mathcal{Q}^{+}(2m+1,q)$, $k<m$, equals
  \[
    \left(\prod^{m-k-1}_{i=0}(q^{i}+1)\right)q^{\frac{1}{2}(k+1)(2m-k)}=2w_{m,k}\;.
  \]
\end{remark}

We now introduce some new examples of large maximal Erd\H{o}s-Ko-Rado sets of the hyperbolic quadric $\mathcal{Q}^{+}(4n+1,q)$.

\begin{example}\label{ikj}
  Consider the hyperbolic quadric $\mathcal{Q}^{+}(4n+1,q)$ and let $\tau$ be a fixed $k$-space on it, $0\leq k\leq2n$. Denote the two classes of generators by $\Omega_{1}$ and $\Omega_{2}$. Let $\mathcal{S}$ be the union of the set of generators of $\Omega_{1}$ meeting $\tau$ in a subspace of dimension at least $j$, $0\leq j\leq k$, and the set of generators of $\Omega_{2}$ meeting $\tau$ in a subspace of dimension at least $k-j$. It is immediate that the elements of $\mathcal{S}$ pairwise intersect. Consequently, $\mathcal{S}$ is an Erd\H{o}s-Ko-Rado set. We denote this type of Erd\H{o}s-Ko-Rado sets by $I_{k,j}$. Note that not all these types are different. If $k<2n$, then $I_{k,j}$ and $I_{k,k-j}$ describe similar sets of generators. Also $I_{2n-1,2j-1}$, $I_{2n,2j-1}$ and $I_{2n,2j}$ describe similar sets of generators, $1\leq j\leq n$.
  \par We show that an Erd\H{o}s-Ko-Rado set $\mathcal{S}$ of type $I_{k,j}$ is maximal. Assume that we can find a generator $\pi$ in $\Omega_{1}\setminus\mathcal{S}$ which meets all generators of $\mathcal{S}$. Since $\pi\notin\mathcal{S}$, we know that $\dim(\pi\cap\tau)<j$. So, we can find a $(k-j)$-space $\tau'$ in $\tau$ disjoint to $\pi\cap\tau$. We know that all generators of $\Omega_{2}$ containing $\tau'$ belong to $\mathcal{S}$. Let $T_{\tau'}$ be the tangent space in $\tau'$ to $\mathcal{Q}^{+}(4n+1,q)$. It is $(4n-k+j)$-dimensional and meets $\pi$ in a $(2n-k+j-1)$-space disjoint to $\tau'$. The intersection $\mathcal{Q}^{+}(4n+1,q)\cap T_{\tau'}$ is a cone with vertex $\tau'$ and base a hyperbolic quadric $\mathcal{Q}_{1}:=\mathcal{Q}^{+}(2(2n-k+j-1)+1,q)$. We can choose this basis such that it contains $\pi'=T_{\tau'}\cap\pi$. Moreover $T_{\tau'}\cap\pi$ is a generator of $\mathcal{Q}_{1}$. The set of generators of $\Omega_{2}$ through $\tau'$, all in $\mathcal{S}$, correspond to the set of generators of one class of $\mathcal{Q}_{1}$. We denote this class by $\Omega'_{2}$. If $k-j$ is even, then $\pi'$ also belongs to $\Omega'_{2}$. However, $2n-k+j-1$ is odd and by an observation made in Remark \ref{skewtogenerator}, we know that we can find a generator $\sigma\in\Omega'_{2}$ skew to $\pi'$. Then $\left\langle\pi',\tau'\right\rangle$ is a generator in $\mathcal{S}$ skew to $\pi$. If $k-j$ is odd, then $\pi'$ belongs to $\Omega'_{1}$, the other class of generators of $\mathcal{Q}_{1}$. In this case, $2n-k+j-1$ is even and by an observation made in Remark \ref{skewtogenerator}, we know that we can find a generator $\sigma\in\Omega'_{2}$ skew to $\pi'$. Then $\left\langle\pi',\tau'\right\rangle$ is a generator in $\mathcal{S}$ skew to $\pi$. The arguments for $\pi\in\Omega_{2}\setminus\mathcal{S}$ are analogous.
  \par Now, we count the number of generators in an Erd\H{o}s-Ko-Rado set $\mathcal{S}$ of type $I_{k,j}$, $k<2n$. We use the result from Lemma \ref{generatorsskewtosubspace}.
  \begin{align*}
    |\mathcal{S}|&=\sum^{k}_{i=j}\gs{k+1}{i+1}{q}w_{2n-i-1,k-i-1}+\sum^{k}_{i=k-j}\gs{k+1}{i+1}{q}w_{2n-i-1,k-i-1}\\
    &=\sum^{k}_{i=j}\gs{k+1}{i+1}{q}\frac{1}{2}\left(\prod^{2n-k-1}_{p=0}(q^{p}+1)\right)q^{\frac{1}{2}(k-i)(4n-k-i-1)}\\&\qquad\qquad+\sum^{k}_{i=k-j}\gs{k+1}{i+1}{q}\frac{1}{2}\left(\prod^{2n-k-1}_{p=0}(q^{p}+1)\right)q^{\frac{1}{2}(k-i)(4n-k-i-1)}\\
    &=\frac{1}{2}\left(\prod^{2n-k-1}_{p=0}(q^{p}+1)\right)\left[\sum^{k}_{i=j}\gs{k+1}{i+1}{q}q^{\frac{1}{2}(k-i)(4n-k-i-1)}+\sum^{k}_{i=k-j}\gs{k+1}{i+1}{q}q^{\frac{1}{2}(k-i)(4n-k-i-1)}\right]\;.\\
  \end{align*}
  Using this result, we can see that an Erd\H{o}s-Ko-Rado set of type $I_{k,j+1}$ is larger than an Erd\H{o}s-Ko-Rado set of type $I_{k,j}$ if and only if $2j+1-k>0$, $k<2n$. Using this and the above mentioned equality between $I_{k,j}$ and $I_{k,k-j}$, we find that the largest among these Erd\H{o}s-Ko-Rado sets are the ones of type $I_{k,k}$, which are also the ones of type $I_{k,0}$. Those contain
  \[
    \prod^{2n}_{i=1}(q^{i}+1)-\frac{1}{2}(q^{\frac{1}{2}(k+1)(4n-k)}-1)\prod^{2n-k-1}_{i=0}(q^{i}+1)\in\Theta(q^{2n^{2}-n+k})
  \]
  generators. In this computation we used the $q$-binomial theorem. Since the Erd\H{o}s-Ko-Rado sets of type $I_{2n,2j-1}$ and $I_{2n,2j}$ are equal to the Erd\H{o}s-Ko-Rado sets $I_{2n-1,2j-1}$, $1\leq j\leq n$, we can use the above formulas to compute their number of elements as well.
  \par So, the only type $I_{k,j}$ of Erd\H{o}s-Ko-Rado sets whose size has not been computed above is $I_{2n,0}$. However, the Erd\H{o}s-Ko-Rado sets of type $I_{2n,0}$ are precisely the ones that are described in Example \ref{oneclass}. Furthermore, the Erd\H{o}s-Ko-Rado sets of type $I_{2n,2n}$ are the ones that are described in Example \ref{secondexample} and the Erd\H{o}s-Ko-Rado sets of type $I_{0,0}$ are the point-pencils.
\end{example}

In the previous example we have introduced several types of Erd\H{o}s-Ko-Rado sets, $I_{k,k}$, whose size is larger than the size of a point-pencil. These are however not the only ones. We shall give two more examples.

\begin{example}\label{iik}
  Again we denote the two classes of generators on $\mathcal{Q}^{+}(4n+1,q)$ by $\Omega_{1}$ and $\Omega_{2}$. Let $\pi$ be a generator of class $\Omega_{1}$ and let $\tau$ be a fixed $k$-space in $\pi$, $0\leq k\leq 2n-2$. Let $\mathcal{S}$ be the union of the set of generators of $\Omega_{1}$ that are not skew to $\tau$ or that meet $\pi$ in a subspace of dimension $i\geq2$, and the set of generators of $\Omega_{2}$ through $\tau$ meeting $\pi$ in a subspace of dimension $2n-1$. It is immediate that the generators in $\mathcal{S}$ pairwise intersect, and hence $\mathcal{S}$ is an Erd\H{o}s-Ko-Rado set. We denote this type of Erd\H{o}s-Ko-Rado sets by $II_{k}$. Its maximality can be proven by arguments similar to the arguments in the proof of the maximality in Example \ref{ikj}.
  \par In the above definition we imposed $k\leq 2n-2$. For $k=2n-1$ this definition gives rise to the Erd\H{o}s-Ko-Rado set described in Example \ref{secondexample}; for $k=2n$ this definition gives rise to the Erd\H{o}s-Ko-Rado set described in Example \ref{oneclass}.
  \par We now count the number of generators in an Erd\H{o}s-Ko-Rado set $\mathcal{S}$ of type $II_{k}$:
  \begin{align*}
    |\mathcal{S}|&=\sum^{n}_{i=1}\gs{2n+1}{2i+1}{q}b^{2n-2i-1}_{2n-2i-1}+\gs{k+1}{1}{q}b^{2n-1}_{2n-1}+\gs{2n-k}{1}{q}\\
    &=\sum^{n}_{i=1}\gs{2n+1}{2i+1}{q}q^{\binom{2(n-i)}{2}}+\gs{k+1}{1}{q}q^{2n^{2}-n}+\gs{2n-k}{1}{q}\;.\\
  \end{align*}
  It can be calculated that an Erd\H{o}s-Ko-Rado set of type $II_{k}$ contains more elements than an Erd\H{o}s-Ko-Rado set $\mathcal{S}$ of type $II_{k'}$ if and only if $k>k'$. Therefore, we calculate the size of an Erd\H{o}s-Ko-Rado set $\mathcal{S}$ of type $II_{2n-2}$:
  \begin{align*}
    |\mathcal{S}|&=\sum^{n}_{i=1}\gs{2n+1}{2i+1}{q}q^{\binom{2(n-i)}{2}}+\gs{2n-1}{1}{q}q^{2n^{2}-n}+\gs{2}{1}{q}\\
    &=\sum^{n-1}_{i=1}q^{\binom{2(n-i)}{2}}\left(q^{2(n-i)}\gs{2n}{2n-2i}{q}+\gs{2n}{2n-2i-1}{q}\right)+1+\frac{q^{2n-1}-1}{q-1}q^{2n^{2}-n}+q+1\\
    &=\sum^{2n-2}_{j=0}\gs{2n}{j}{q}q^{\binom{j}{2}}q^{j}+\frac{q^{2n-1}-1}{q-1}q^{2n^{2}-n}+q+1\\
    &=\sum^{2n}_{j=0}\gs{2n}{j}{q}q^{\binom{j}{2}}q^{j}-q^{2n^{2}+n}-\frac{q^{2n}-1}{q-1}q^{2n^{2}-n}+\frac{q^{2n-1}-1}{q-1}q^{2n^{2}-n}+q+1\\
    &=\prod^{2n}_{i=1}(q^{i}+1)-q^{2n^{2}+n}-q^{2n^{2}+n-1}+q+1\in\Theta(q^{2n^{2}+n-2})\;.
  \end{align*}
  Analogously, the size of an Erd\H{o}s-Ko-Rado set $\mathcal{S}$ of type $II_{0}$ can be calculated. We find
  \[
    |\mathcal{S}|=\prod^{2n}_{i=1}(q^{i}+1)-\frac{(q^{2n}-1)(q^{2n^{2}-n+1}-1)}{q-1}\in\Theta(q^{2n^{2}+n-3})\;.
  \]
\end{example}

\begin{example}
  Before introducing the example, we recall the \emph{triality} map for $\mathcal{Q}^{+}(7,q)$, which has its origins in \cite{tit}; we follow the approach from \cite{lt}. Denote the two generator classes of $\mathcal{Q}^{+}(7,q)$ by $\Omega'_{1}$ and $\Omega'_{2}$. Let $\mathcal{P}$ be the set of points on $\mathcal{Q}^{+}(7,q)$ and let $\mathcal{L}$ be the set of lines on $\mathcal{Q}^{+}(7,q)$. Note that $|\mathcal{P}|=|\Omega'_{1}|=|\Omega'_{2}|$. A $D_{4}$-geometry $\mathcal{G}$ can be constructed as follows. The elements of $\mathcal{P}$ are the $0$-points, the elements of $\Omega_{i}$ are the $i$-points, $i=1,2$, and the elements of $\mathcal{L}$ are the lines. Incidence is defined by symmetrized containment for all combinations of two groups, except for $1$-points and $2$-points; we define a $1$-point and a $2$-point to be incident if they meet in a plane of $\mathcal{Q}^{+}(7,q)$. Every permutation of $\{\mathcal{P},\Omega'_{1},\Omega'_{2}\}$ defines a geometry isomorphic to $\mathcal{G}$. A triality of $\mathcal{G}$ is a map $t$
  \[
    t:\mathcal{L}\rightarrow\mathcal{L},\mathcal{P}\rightarrow\Omega'_{1},\Omega'_{1}\rightarrow\Omega'_{2},\Omega'_{2}\rightarrow\mathcal{P}\;
  \]
  preserving the incidence in $\mathcal{G}$ and such that $t^{3}$ is the identity relation. Such maps are known to exist and are used to construct generalised hexagons.
  \par We use the triality to prove a result about the generators of $\mathcal{Q}^{+}(7,q)$. Let $\pi_{1}$ and $\pi_{2}$ be two disjoint generators of $\Omega'_{2}$ and let $\mathcal{S'}$ be the set of all generators of $\Omega'_{2}$ meeting both $\pi_{1}$ and $\pi_{2}$ in a line. Let $\mathcal{S}''$ be the set of all generators of $\Omega'_{2}$ having a nonempty intersection with all elements of $\mathcal{S}'$. We will show that $\mathcal{S}''=\{\pi_{1},\pi_{2}\}$. It is clear that $\pi^{t}_{1}$ and $\pi^{t}_{2}$ are two points not on a line of $\mathcal{L}$. The set $\mathcal{S}'^{t}$ contains all points of $\mathcal{Q}^{+}(7,q)$ that are collinear with both $\pi^{t}_{1}$ and $\pi^{t}_{2}$. Therefore, $\mathcal{S}'^{t}$ is the set of points on a hyperbolic quadric $\mathcal{Q}^{+}(5,q)$ inside $\mathcal{Q}^{+}(7,q)$. The only points collinear with all points of $\mathcal{S}'^{t}$ are the points $\pi^{t}_{1}$ and $\pi^{t}_{2}$ themselves. Hence, $\mathcal{S}''^{t}=\{\pi^{t}_{1},\pi^{t}_{2}\}$. The statement follows. Note that we can replace $\Omega'_{2}$ by $\Omega'_{1}$ in the statement; replacing $t$ by $t^{2}$, this proof continues.
  \par Now, we consider the hyperbolic quadric $\mathcal{Q}^{+}(4n+1,q)$, $n\geq2$. Denote the two classes of generators by $\Omega_{1}$ and $\Omega_{2}$. Let $\pi$ and $\pi'$ be two generators of $\Omega_{1}$ meeting in a $(2n-4)$-space $\tau$. Let $\mathcal{S}$ be the set containing $\pi$, $\pi'$ and all generators of $\Omega_{2}$ meeting $\pi$ and $\pi'$. It is clear that $\mathcal{S}$ is an Erd\H{o}s-Ko-Rado set. We denote this type of Erd\H{o}s-Ko-Rado sets by $III$.
  \par We prove that an Erd\H{o}s-Ko-Rado set $\mathcal{S}$ of type $III$ is maximal. It is obvious that no generators of $\Omega_{2}$ that are not in $\mathcal{S}$ extend $\mathcal{S}$. Let $\pi''$ be a generator of $\Omega_{1}$ meeting all generators of $\mathcal{S}$. We note that $\mathcal{S}$ contains all generators of $\Omega_{2}$ having a nonempty intersection with $\tau$. We show that $\pi''$ has to contain $\tau$. If $\pi''$ does not contain $\tau$, then we can find a point $P\in\tau\setminus\pi''$. Through $P$ we can find a generator $\sigma'\in\Omega_{2}$ skew to $\pi''$ (by Remark \ref{skewtogenerator} there are $b^{2n-1}_{2n-1}$ such generators). This is a contradiction since $\sigma'$ clearly is contained in $\mathcal{S}$. Hence, we know that $\pi''$ contains $\tau$. Now we consider the tangent space $T_{\tau}$ in $\tau$ to $\mathcal{Q}^{+}(4n+1,q)$. The intersection $T_{\tau}\cap\mathcal{Q}^{+}(4n+1,q)$ is a cone with vertex $\tau$ and base a hyperbolic quadric $\mathcal{Q}^{+}(7,q)$. The generators $\pi$, $\pi'$ and $\pi''$ intersect this base in $\overline{\pi}$, $\overline{\pi}'$ and $\overline{\pi}''$, respectively. These are generators of the hyperbolic quadric $\mathcal{Q}^{+}(7,q)$ of the same class, say $\Omega'_{2}$. Let $\sigma$ be a generator of $\mathcal{Q}^{+}(7,q)$ of class $\Omega'_{2}$, meeting both $\overline{\pi}$ and $\overline{\pi}'$ in a line. We know that there are $b^{2n-4}_{2n-4}$ generators of $\mathcal{Q}^{+}(4n+1,q)$ through $\sigma$ disjoint to $\tau$, necessarily all of class $\Omega_{2}$; these generators are contained in $\mathcal{S}$. Hence $\overline{\pi}''$ has to meet all generators of $\mathcal{Q}^{+}(7,q)$ of class $\Omega'_{2}$ that meet both $\overline{\pi}$ and $\overline{\pi}'$ in a line, as $\sigma$ could be arbitrarily chosen. By the above observation on $\mathcal{Q}^{+}(7,q)$, we know that $\overline{\pi}''\in\{\overline{\pi},\overline{\pi}'\}$. Hence $\pi''\in\{\pi,\pi'\}$, and consequently $\mathcal{S}$ is maximal.
  \par We count the number of generators in an Erd\H{o}s-Ko-Rado set $\mathcal{S}$ of type $III$.
  \begin{align*}
    |\mathcal{S}|&=2+\left(\prod^{2n}_{i=1}(q^{i}+1)-w_{2n,2n-4}\right)+\gs{4}{2}{q}q^{4(2n-3)}b^{2n-4}_{2n-4}\\
    &=2+\prod^{2n}_{i=1}(q^{i}+1)-\left(\prod^{3}_{i=1}(q^{i}+1)\right)q^{(n+2)(2n-3)}+(q^{2}+1)(q^{2}+q+1)q^{(n+2)(2n-3)}\\
    &=\prod^{2n}_{i=1}(q^{i}+1)-q^{2n^{2}+n-6}(q^{6}+q^{5}+q^{3}-q^{2})+2\in\Theta(q^{2n^{2}+n-2})
  \end{align*}
\end{example}

\begin{remark}
  We already noted that the size of the largest maximal Erd\H{o}s-Ko-Rado set is of order $\Theta(q^{2n^{2}+n})$ and the size of the second largest maximal Erd\H{o}s-Ko-Rado set is of order $\Theta(q^{2n^{2}+n-1})$. In the previous Examples we have described three types of maximal Erd\H{o}s-Ko-Rado sets of the next order $\Theta(q^{2n^{2}+n-2})$, namely $I_{2n-2,2n-2}$, $II_{2n-2}$ and $III$. However, it should be noted that the Erd\H{o}s-Ko-Rado sets of type $I_{2n-2,2n-2}$ and Erd\H{o}s-Ko-Rado sets of type $II_{2n-2}$ are the same ones. This is an exceptional case; this pattern does not continue for other Erd\H{o}s-Ko-Rado sets of type $I_{k,k}$ and $II_{k'}$. It can be easily calculated that the Erd\H{o}s-Ko-Rado sets of type $I_{2n-2,2n-2}$ ($II_{2n-2}$) are larger than the Erd\H{o}s-Ko-Rado sets of type $III$.
  \par Calculations in Example \ref{ikj} and Example \ref{iik} show that there are many different Erd\H{o}s-Ko-Rado sets whose size is larger than the size of the point-pencil. So, a complete classification of all Erd\H{o}s-Ko-Rado sets whose size is at least the size of a point-pencil is out of sight for the moment.
\end{remark}

\paragraph*{Acknowledgement:} The research of the author is supported by FWO-Vlaanderen (Research Foundation - Flanders). A part of this research was done while the author was visiting the Eötvös Lor\'and University in Budapest. He wants to thank the members of the Department of Computer Science for their hospitality. He also wants to thank the `Fonds professor Frans Wuytack' for their financial support for this visit.

Address: Maarten De Boeck, UGent, Department of Mathematics, Krijgslaan 281-S22, 9000 Gent, Flanders, Belgium.
\par Email address: mdeboeck@cage.ugent.be

\end{document}